\numberwithin{equation}{section}
\newtheorem{theorem}{Theorem}[section]
\newtheorem{proposition}[theorem]{Proposition}
\newtheorem{lemma}[theorem]{Lemma}
\newtheorem{corollary}[theorem]{Corollary}
\newtheorem{step}{Step}
\theoremstyle{definition}
\newtheorem{definition}[theorem]{Definition}
\theoremstyle{remark}
\newtheorem{remark}[theorem]{Remark}
\newtheorem{claim}[theorem]{Claim}
\renewcommand{\hom}{\operatorname{Hom}}
\renewcommand{\ker}{\operatorname{Ker}}
\newcommand{\Z}{\mathbb{Z}}
\newcommand{\Q}{\mathbb{Q}}
\newcommand{\R}{\mathbb{R}}
\newcommand{\C}{\mathbb{C}}
\newcommand{\proj}{{\mathbb P}}
\newcommand{\D}{\mathcal{D}}
\newcommand{\G}{\Gamma}
\newcommand{\Gr}{\rm Gr}
\newcommand{\GF}{\Gamma_{F, \ell}}
\newcommand{\GFh}{\Gamma_{F,h}}
\newcommand{\GFhd}{\Gamma_{F,h}'}
\newcommand{\GFZ}{\Gamma(F)}
\newcommand{\UFZ}{U(F)_{\mathbb{Z}}}
\newcommand{\Xcpt}{X^{\Sigma}}
\newcommand{\DFs}{D_{F,\sigma}}
\newcommand{\DFS}{D_{F,[\sigma]}}
\newcommand{\DFscpt}{\overline{D_{F, \sigma}}}
\newcommand{\DFScpt}{\overline{D_{F, [\sigma]}}}
\newcommand{\YFcpt}{\overline{Y}_{F}}
\newcommand{\DD}{\Delta(D)}
\newcommand{\CF}{\mathcal{C}_{F}}
\newcommand{\DSG}{\Delta(\Sigma/\Gamma)}
\newcommand{\Fp}{\mathcal{F}_{p}}
\DeclareMathOperator{\aut}{Aut}
\DeclareMathOperator{\coker}{Coker}
\DeclareMathOperator{\im}{Im}
\begin{document}

\title[]{Corank spectral sequence for locally symmetric varieties}
\author[]{Shouhei Ma}
\thanks{Supported by KAKENHI 21H00971 and 20H00112} 
\address{Department~of~Mathematics, Institute~of~Science~Tokyo, Tokyo 152-8551, Japan}
\email{ma@math.titech.ac.jp}
\keywords{} 

\begin{abstract}
We construct a new type of spectral sequences for 
the mixed Hodge structures on the cohomology of locally symmetric varieties. 
These spectral sequences converge to the edge components in the Hodge triangles, 
and the $E^{1}$-terms are expressed by group cohomology associated to the cusps. 
They already degenerate at $E^1$ in a certain range, 
which gives a simple expression of some Hodge components. 
An identity of holomorphic Euler numbers is obtained as a consequence. 
\end{abstract}

\maketitle

\section{Introduction}\label{sec: intro}

Let $X={\D}/{\G}$ be a locally symmetric variety of dimension $n$, 
where ${\D}$ is a Hermitian symmetric domain and ${\G}$ is a neat arithmetic subgroup of ${\rm Aut}({\D})$. 
The singular cohomology $H^k(X)=H^k(X, {\Q})$ of $X$ is a rich object of study. 
Since $X$ is an algebraic variety, $H^k(X)$ has a canonical mixed Hodge structure. 
We denote by $(F^{\bullet}, W_{\bullet})$ the Hodge and weight filtrations. 
Since $X$ is smooth, $H^k(X)$ has weight $\geq k$. 
The pure weight $k$ part $W_kH^k(X)$ is the image from the $L^2$-cohomology (\cite{HZIII}), 
and a lot of study have been done for this part mainly in connection with discrete automorphic representations. 

On the other hand, the higher weight graded quotients ${\Gr}^{W}_{l}H^k(X)$, 
in principle related to automorphic forms in the continuous spectrum, 
should also have significance. 
A traditional approach which essentially goes back to Harder \cite{Ha2} 
is to make use of the cohomology of the Borel-Serre boundary and Eisenstein series. 
While Hodge theory of the boundary cohomology was well-established (\cite{HZII}, \cite{BW}),  
it would be fair to say that the mixed Hodge structure of $X$ itself has been studied 
still somewhat sporadically (e.g., \cite{Fr}, \cite{OS1}, \cite{OS2}, \cite{Si}). 
We wish to develop a more direct approach, 
which, when combined with knowledge from the boundary, would  
lead to a better understanding of the cohomology of $X$.  

When $k$ is small, the weight filtration on $H^k(X)$ is somewhat reduced: 
for example, $H^k(X)=W_kH^k(X)$ when $k$ is smaller than the codimension of the Baily-Borel boundary (\cite{HZIII}); 
when $k<n$, the weight filtration on $F^kH^k(X)$ is trivial (\cite{Ma2}). 
As $k$ grows, the weight filtration gets more nontrivial and interesting. 
In \cite{Ma2}, the middle degree case $k=n$ was studied in connection with Siegel operators. 

The purpose of this paper is to study the mixed Hodge structure on $H^k(X)$ in the case $k>n$. 
In what follows, let us rewrite the cohomology degree as $2n-k$ with $k\leq n$.  
The Hodge filtration $F^{\bullet}$ on $H^{2n-k}(X)$ has level $n-k \leq \bullet \leq n$. 
We will focus on the last piece $F^{n}H^{2n-k}(X)$, which is an edge of the Hodge triangle. 
It turns out that this part is already rich and at the same time relatively accessible.  
Our main result is construction of spectral sequences which compute its weight graded quotients. 

In order to state the result, let us introduce some notations. 
Let $\mathbb{G}$ be the algebraic group over ${\Q}$ with 
${\G}<\mathbb{G}({\Q})$ and $\mathbb{G}({\R})^{\circ}={\rm Aut}({\D})^{\circ}$, 
which we assume to be ${\Q}$-simple for simplicity (cf.~Remark \ref{remark: non Q-simple}). 
Let $r$ be the ${\Q}$-rank of $\mathbb{G}$. 
We have $H^{2n-k}(X)=0$ when $k<r$ (\cite{BS}). 
There is a flag $F_1\succ \cdots \succ F_r$ of reference cusps of ${\D}$ such that 
every cusp $F$ of ${\D}$ is equivalent to one of $F_i$ under the action of $\mathbb{G}({\Q})$. 
The index $i$ is called the \textit{corank} of $F$. 
We denote by $\mathcal{C}(i)$ the set of ${\G}$-equivalence classes of cusps of corank $i$. 
For a cusp $F$, let $N(F)$ be the stabilizer of $F$ in ${\rm Aut}({\D})^{\circ}$, 
$U(F)$ be the center of the unipotent radical of $N(F)$, 
and ${\GFZ}=N(F)\cap {\G}$. 
Then let ${\GF}$ be the image of ${\GFZ}\to {\aut}(U(F))$. 
On the other hand, the Siegel domain realization of ${\D}$ in the direction of $F$ 
defines a certain abelian fibration $Y_F$ over a finite cover of $F/{\GFZ}$. 
We denote by ${\YFcpt}$ an arbitrary smooth projective model of $Y_F$. 
Since ${\GF}$ acts on $Y_{F}$, $H^0(\Omega^{p}_{{\YFcpt}})$ is a ${\GF}$-module. 

We write $n(i)=\dim U(F_i)$. 
Then $0<n(1)< \cdots <n(r)\leq n$. 
We also set $n(0)=0$. 
For a ${\C}$-linear space $V$, we denote 
by $V^{\vee}$ the ${\C}$-linear dual and 
$V^{\ast}$ the $\overline{{\C}}$-linear dual. 
We can now state our main result. 

\begin{theorem}[Theorem \ref{thm: main}]\label{thm: main intro}
Let $0\leq p <n$ be fixed. 
Then there exists a first-quadrant homological spectral sequence 
\begin{equation*}\label{eqn: corank ss intro}
E^{1}_{i,m-i} = \bigoplus_{F\in \mathcal{C}(i)} H^{n(i)-m}({\GF}, H^0(\Omega^{p}_{{\YFcpt}})^{\ast}) 
\quad \Rightarrow \quad  
E^{\infty}_{m}\simeq {\Gr}^{W}_{2n-p}F^nH^{2n-p-m}(X), 
\end{equation*}
where the last isomorphism is valid for $m \geq 2$. 
We have $E^{1}_{i,j}\ne 0$ only in the range 
\begin{equation}\label{eqn: shape intro}
1\leq i \leq d(p), \qquad 0\leq j \leq n(i)-i, 
\end{equation}
where $0\leq d(p)\leq r$ is the largest corank with $n(d(p))\leq n-p$. 
\end{theorem}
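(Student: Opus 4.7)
The plan is to fix a smooth toroidal compactification $\Xcpt$ of $X$ with simple normal crossing boundary $D$, extract $\Gr^{W}_{2n-p} H^{2n-p-m}(X)$ via Deligne's weight spectral sequence for $(\Xcpt, D)$, apply $F^n$ throughout, and reorganize the resulting combinatorial sum of boundary contributions by the corank of the cusp rather than by the codimension of the stratum in $\Xcpt$. The cellular (co)sheaf formalism mentioned in the introduction will serve as the bookkeeping device: on the orbit cone complex $\DSG$ I would attach, at each cell represented by a cone $\sigma \subset U(F)_{\R}$, the $F^{n}$-piece of the Hodge structure on the toroidal stratum $\DFs$. Global sections of this cellular cosheaf then recover the $F^n$-piece of $\Gr^{W} H^{*}(X)$, and filtering $\DSG$ by the corank of the underlying cusp produces the desired spectral sequence.

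For the identification of $E^{1}$ I would fix a cusp $F$ of corank $i$ and compute the $F^{n}$-Hodge piece on each stratum $\DFs$ with $\dim \sigma = j$, showing that it decomposes into a Hodge factor pulled from the base $\YFcpt$ and a combinatorial factor from the cone directions. The Hodge-theoretic content to verify is that the base factor is exactly $H^{0}(\Omega^{p}_{\YFcpt})^{\ast}$: total degree $2n-p-m$ and weight $2n-p$ force the $F^{n}$-piece to have Hodge type $(n, n-p-m)$, and Tate-twist bookkeeping along the abelian fibration should isolate the $(0,p)$-type antiholomorphic factor on $\YFcpt$, i.e.\ the conjugate dual of $H^{0}(\Omega^{p}_{\YFcpt})$. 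Summing the cellular cosheaf over cones in $\CF$ modulo $\GF$ should then reorganize the combinatorial part into the $\GF$-equivariant cellular cohomology of a contractible model of $\CF$, which is the group cohomology $H^{n(i)-m}(\GF)$.

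The shape bounds \eqref{eqn: shape intro} will follow from: (i) cones in $U(F)_{\R}$ have dimension at most $n(i)$, and $\GF$-equivariant cellular cohomology vanishes in codegree beyond the virtual cohomological dimension $n(i)-i$ of $\GF$; (ii) the factor $H^{0}(\Omega^{p}_{\YFcpt})$ is nonzero only when $p$ does not exceed $\dim \YFcpt$, forcing $n(i) \leq n-p$ and hence $i \leq d(p)$. The restriction $m \geq 2$ for the abutment should reflect the fact that, for $m \in \{0, 1\}$, pure classes on $\Xcpt$ (with pure Hodge structure by \cite{HZIII}) can land in the same Hodge bidegree and perturb the target; the cutoff matches the expected edge behaviour of the toroidal weight spectral sequence.

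The principal obstacle I anticipate is executing the $F^{n}$-Hodge computation on the torus-bundle strata rigorously enough to obtain the clean tensor factorisation $H^{0}(\Omega^{p}_{\YFcpt})^{\ast} \otimes H^{n(i)-m}(\GF)$, rather than some filtered object whose graded pieces merely have this shape. A closely related difficulty is ensuring that the cellular cosheaf formalism is fully compatible with Deligne's mixed Hodge structure so that the filtrations pass through the combinatorial sum without distortion. Pinning down the sharp range $m \geq 2$ for convergence will likely require a separate low-degree analysis, possibly by directly invoking the vanishing results quoted in the introduction and cross-checking against the middle-degree computations of \cite{Ma2}.
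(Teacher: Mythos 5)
Your architecture coincides with the paper's: weight spectral sequence for $X\subset\Xcpt$, reinterpretation of the $F^n$-part of its $E_1$ page as the chain complex of a cellular cosheaf on $\Delta(\Sigma/\Gamma)$, filtration of that complex by the corank of the underlying cusp, and identification of the resulting $E^1$ page cusp by cusp. The shape bounds and the explanation of the $m\geq 2$ cutoff (the $m=0$ column $H^0(\Omega^p_{\Xcpt})^\ast$ is not seen by the dual complex, so one only gets an injection at $m=1$) are also essentially right.

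However, the step you yourself flag as the principal obstacle --- obtaining the clean tensor factorization $H^0(\Omega^p_{\YFcpt})^\ast\otimes H^{n(i)-m}(\Gamma_F)$ rather than a filtered object --- is a genuine gap, and your proposed route through ``Tate-twist bookkeeping along the abelian fibration'' is not how it is resolved. The key input is the \emph{stable birational invariance of $H^0(\Omega^p)$}: each stratum $D_{F,\sigma}$ is a Zariski-locally trivial torus bundle over $Y_F$, so $\overline{D_{F,\sigma}}$ is stably birational to $\YFcpt$ and pullback gives a \emph{canonical} isomorphism $H^0(\Omega^p_{\overline{D_{F,\sigma}}})\simeq H^0(\Omega^p_{\YFcpt})$; moreover for $\tau\prec\sigma$ the projection to $Y_F$ factors through the inclusion $\overline{D_{F,\sigma}}\subset\overline{D_{F,\tau}}$, so under these normalizations every extension map of the cosheaf becomes the identity on the $H^0(\Omega^p_{\YFcpt})^\ast$ factor. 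This is exactly what makes the relative chain complex over a corank-$i$ cusp split as $C_\bullet(\Delta(F),\partial\Delta(F))\otimes H^0(\Omega^p_{\YFcpt})^\ast$, and no filtration argument is needed. A second, smaller gap: passing from $H_{m-1}(\Delta(F),\partial\Delta(F))$ to $H^{n(i)-m}(\Gamma_F)$ is not just ``equivariant cellular cohomology of a contractible model''; one must identify the relative homology with $H^{m-1}_c(\mathcal{C}_F/\Gamma_F)^\vee$ and then apply Poincar\'e duality on the open $(n(i)-1)$-manifold $\mathcal{C}_F/\Gamma_F$, which is a $K(\Gamma_F,1)$ since $\mathcal{C}_F$ is contractible and $\Gamma$ is neat. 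Without these two ingredients the $E^1$ identification does not close.
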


We call this spectral sequence the \textit{corank spectral sequence} of level $p$. 
It computes ${\Gr}^{W}_{2n-p}F^nH^{2n-\ast}(X)$ 
from simpler group cohomology associated to the cusps. 
Note that the cohomology degree $2n-\ast$ varies while the weight $2n-p$ is fixed. 
See Figure \ref{figure: E1 range} for a shape of the range \eqref{eqn: shape intro}, 
and Figure \ref{figure: weight ss D} for a plot of the $E^{\infty}$-terms. 
A spectral sequence of the same form holds even when ${\G}$ is non-neat (\S \ref{ssec: non-neat}). 

Theorem \ref{thm: main intro} has some consequences (\S \ref{sec: complement}). 
First, by the shape \eqref{eqn: shape intro} of the $E^1$ page, 
the spectral sequence already degenerates at $E^1$ in the range $m\geq n(d(p)-1)+2$. 
This gives an isomorphism 
\begin{equation}\label{eqn: E1 degeneration intro}
{\Gr}^{W}_{2n-p}F^nH^{2n-p-m}(X) \simeq 
\bigoplus_{F\in \mathcal{C}(d(p))} H^{n(d(p))-m}({\GF}, H^0(\Omega^{p}_{{\YFcpt}})^{\ast}) 
\end{equation}
in that range (see Figure \ref{figure: E1 degeneration}). 
This generalizes a result of \cite{Ma2} in the case $k=n$ where \eqref{eqn: E1 degeneration intro} takes the form 
\begin{equation*}
{\Gr}^{W}_{n+n(i)}F^nH^{n}(X) \simeq 
\bigoplus_{F\in \mathcal{C}(i)} H^0(K_{{\YFcpt}})^{{\GF}}.  
\end{equation*}
In Theorem \ref{thm: main intro}, 
cohomology of ${\GF}$ of degree $>0$ and holomorphic forms on ${\YFcpt}$ of non-top degree arise,  
and a spectral sequence emerges instead of simple isomorphisms. 

When $p=0$, the corank spectral sequence takes the form  
\begin{equation}\label{eqn: cork ss p=o intro}
E^{1}_{i,k-i} = \bigoplus_{F\in \mathcal{C}(i)} H^{n(i)-k}({\GF}) 
\quad \Rightarrow \quad  
E^{\infty}_{k}\simeq {\Gr}^{W}_{2n}H^{2n-k}(X) 
\end{equation}
(Proposition \ref{prop: p=0}). 
This shows that the top weight pieces of $H^{2n-\ast}(X)$ 
are determined from the cohomology of the boundary groups ${\GF}$. 
For some tube domains, the next case $p=1$ is even simpler (Proposition \ref{prop: p=1}). 

Another consequence is the following Euler number identity. 

\begin{corollary}[\S \ref{ssec: Euler number}]\label{cor: Euler number intro}
We have 
\begin{equation*}
\sum_{k=r}^{n} (-1)^k \dim F^nH^{2n-k}(X) - \chi_{{\rm hol}}(\overline{X}) \: = \: 
\sum_{F} (-1)^{\dim U(F)} \chi_{{\rm hol}}({\YFcpt})\cdot \chi({\GF}). 
\end{equation*}
Here $\overline{X}$ is a smooth projective model of $X$, 
$\chi_{{\rm hol}}(V)$ is the holomorphic Euler number of a smooth projective variety $V$, and 
$\chi({\GF})$ is the Euler number of ${\GF}$. 
\end{corollary}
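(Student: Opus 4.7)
The plan is to take the Euler characteristic of the corank spectral sequence level by level in $p$, and then to sum $(-1)^p$ across the levels. For fixed $p$, the Euler characteristic of the $E^{1}$-page (equal to that of $E^{\infty}$) is
\[
\chi(E^{1}(p))=\sum_{i=1}^{d(p)}(-1)^{i}\sum_{F\in\mathcal{C}(i)}h^{p,0}(\YFcpt)\sum_{j=0}^{n(i)-i}(-1)^{j}\dim H^{n(i)-i-j}(\GF).
\]
Reindexing the inner sum by $l=n(i)-i-j$ and using that $\GF$ has cohomological dimension at most $n(i)-i$, it collapses to $(-1)^{n(i)-i}\chi(\GF)$; gathering signs and noting $\dim U(F)=n(i)$ for $F\in\mathcal{C}(i)$ gives
\[
\chi(E^{1}(p))=\sum_{F}(-1)^{\dim U(F)}h^{p,0}(\YFcpt)\chi(\GF).
\]
Multiplying by $(-1)^{p}$ and summing over $p$ folds the inner Dolbeault sum into $\chi_{\rm hol}(\YFcpt)$, reproducing the right-hand side of the asserted identity.

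On the cohomology side, set $A_{m}^{p}:=\dim F^{n}\Gr^{W}_{2n-p}H^{2n-p-m}(X)$. Since $E^{\infty}_{0}(p)=0$ (forced by $E^{1}_{0,0}=0$) and $E^{\infty}_{m}\simeq A_{m}^{p}$ for $m\geq 2$, one may rewrite
\[
\chi(E^{1}(p))=\sum_{m\geq 0}(-1)^{m}A_{m}^{p}-A_{0}^{p}+A_{1}^{p}-\dim E^{\infty}_{1}(p).
\]
Setting $k=p+m$ and summing $(-1)^{p}$ over $p$ reassembles the $\sum_{m}(-1)^{m}A_{m}^{p}$ into $\sum_{k}(-1)^{k}\dim F^{n}H^{2n-k}(X)$. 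Combined with the previous expression for $\chi(E^{1}(p))$, the corollary reduces to the two edge identities
\[
\sum_{p}(-1)^{p}A_{0}^{p}=\chi_{\rm hol}(\overline{X}),\qquad \sum_{p}(-1)^{p}\bigl(A_{1}^{p}-\dim E^{\infty}_{1}(p)\bigr)=0.
\]

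For the first identity, $F^{n}\Gr^{W}_{2n-p}H^{2n-p}(X)$ is the image of $F^{n}H^{2n-p}(\overline{X})\to H^{2n-p}(X)$ by strictness of the Hodge filtration for morphisms of mixed Hodge structures, and Serre duality identifies $\dim F^{n}H^{2n-p}(\overline{X})$ with $h^{p,0}(\overline{X})$, giving $\chi_{\rm hol}(\overline{X})$ after summation. The main obstacle is the second identity: the first-quadrant shape of the spectral sequence forces $E^{\infty}_{1}(p)=E^{1}_{1,0}(p)$, so the question becomes whether the edge map to $A_{1}^{p}$ is an isomorphism, which the theorem guarantees only in the range $m\geq 2$. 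I expect that either the construction of the spectral sequence in fact yields the isomorphism at $m=1$ as well, or the discrepancy for individual $p$ cancels in the alternating sum over $p$; pinning one of these down is the principal technical difficulty.
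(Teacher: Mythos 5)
Your computation of $\chi(E^{1}(p))$ and the reassembly of $\sum_{p}(-1)^{p}\sum_{m}(-1)^{m}A^{p}_{m}$ into $\sum_{k}(-1)^{k}\dim F^{n}H^{2n-k}(X)$ are fine, and the overall strategy (alternating sum of Euler characteristics of the corank spectral sequences over $p$) is the right one. But the reduction to your two ``edge identities'' contains a genuine error, and the identity you flag as the principal difficulty is indeed where the argument breaks: \emph{neither} edge identity holds separately. For the first one, $F^{n}\Gr^{W}_{2n-p}H^{2n-p}(X)=F^{n}W_{2n-p}H^{2n-p}(X)$ is the image of $F^{n}H^{2n-p}(\overline{X})\to H^{2n-p}(X)$, but this map is \emph{not} injective: by the weight spectral sequence its kernel on the $F^{n}$-part is the image of the Gysin map from $F^{n-1}H^{2n-p-2}(D(1))$, whose dimension is that of the Eisenstein part $H^{0}_{\rm Eis}(\Omega^{p}_{X})$ (this is exactly the Lemma closing \S\ref{ssec: Euler number}). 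So $\sum_{p}(-1)^{p}A^{p}_{0}=\chi_{\rm hol}(\overline{X})-\sum_{p}(-1)^{p}\dim H^{0}_{\rm Eis}(\Omega^{p}_{X})$, and the correction term is generically nonzero. Symmetrically, $\dim E^{\infty}_{1}(p)-A^{p}_{1}=\dim H^{0}_{\rm Eis}(\Omega^{p}_{X})$: the difference between $H_{0}(\Delta(D),\mathcal{F}_{p})$ (cokernel of the truncated complex) and $F^{n}\Gr^{W}_{2n-p}H^{2n-p-1}(X)$ (homology of the full complex at $m=1$) is again the image of the Gysin map into the $m=0$ term. The two defects cancel, so the combined identity $\sum_{p}(-1)^{p}\bigl(A^{p}_{0}-A^{p}_{1}+\dim E^{\infty}_{1}(p)\bigr)=\chi_{\rm hol}(\overline{X})$ is true, but your proposal neither proves this cancellation nor correctly establishes either summand. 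The clean repair within your homological setup is to avoid the abutment at $m=0,1$ altogether and instead compare chain-level Euler characteristics: the full complex \eqref{eqn: Fn E1} has Euler characteristic $\sum_{m\geq 0}(-1)^{m}A^{p}_{m}$, and it differs from the truncated complex $C_{\bullet}(\Delta(D),\mathcal{F}_{p})$ (whose Euler characteristic is $-\chi(E^{1}(p))$ up to the degree shift) only by the single term $H^{0}(\Omega^{p}_{\Xcpt})^{\ast}$ in position $m=0$, of dimension $h^{p,0}(\overline{X})$.

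The paper avoids this issue entirely by using the \emph{cohomological} corank spectral sequence of Theorem \ref{thm: main cohomological}, whose abutment $F^{p}\Gr^{W}_{p}H^{p+m-1}(D)$ is an exact identification for all $m\geq 1$ with no edge exceptions; it then passes from $H^{\ast}(D)$ to $H^{\ast}_{c}(X)$ via the long exact sequence of the pair $(\Xcpt,D)$, where the $\chi_{\rm hol}(\Xcpt)$ term emerges from the purity of $H^{\ast}(\Xcpt)$, and concludes by Poincar\'e duality $\sum_{p}\dim F^{p}\Gr^{W}_{p}H^{k}_{c}(X)=\dim F^{n}H^{2n-k}(X)$. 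You should either switch to that route or supply the cancellation argument above.
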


The alternating sum of $\dim F^nH^{2n-k}(X)$ can be viewed as an analogue of 
holomorphic Euler number for the open variety $X$. 
Corollary \ref{cor: Euler number intro} relates its difference with $\chi_{{\rm hol}}(\overline{X})$ 
to the boundary Euler numbers which are more computable.  
Indeed, in typical examples, 
$\chi({\GF})$ can be expressed by special values of Dedekind $\zeta$-functions by a theorem of Harder \cite{Ha1}. 

Let us explain an outline of the proof of Theorem \ref{thm: main intro}. 
We choose a smooth projective toroidal compactification ${\Xcpt}$ of $X$ 
whose boundary divisor $D={\Xcpt}-X$ is simple normal crossing. 
By the weight spectral sequence for $X\subset {\Xcpt}$,  
${\Gr}^{W}_{2n-p}F^nH^{\ast}(X)$ can be expressed as the homology of a certain chain complex. 
We introduce a filtration on this chain complex: 
the corank spectral sequence will be then obtained as the spectral sequence for this filtered complex. 

The best way to define and understand this filtration would be to use the dual CW complex $\Delta(D)$ of $D$. 
Our chain complex can be interpreted as 
the chain complex of a certain cellular cosheaf (in the sense of \cite{Cu}) on $\Delta(D)$. 
The point is that $\Delta(D)$ has a filtration by the corank of the underlying cusps. 
This induces the sought filtration on the chain complex. 
In this way, the corank spectral sequence can be interpreted as the spectral sequence 
that computes certain cosheaf homology of $\Delta(D)$ with the ``corank filtration'' on $\Delta(D)$. 

After this interpretation, the bulk of the proof of Theorem \ref{thm: main intro} is computation of the $E^1$-terms. 
This is a mixture of arguments in algebraic geometry, combinatorial topology and group cohomology. 
This variety of techniques would reflect the fact that 
cohomology of locally symmetric varieties sits on intersection of various branches of Mathematics. 

In the case of Hilbert modular varieties, 
the corank spectral sequence leads to a simple proof of classical results of 
Harder \cite{Ha2} and Ziegler \cite{Fr} (\S \ref{ssec: Hilbert}). 
While their approach was based on analytic continuation of Eisenstein series, this is not used here; 
weight comparison of mixed Hodge structures is used instead. 
We believe that existence of two proofs in this special case is rather a tip of an iceberg: 
mixed Hodge structures and Eisenstein series should be linked in a deeper level. 

Finally, we want to notice that the corank filtration considered in this paper has some similarity with 
the ``holomorphic rank filtration'' considered in \cite{HZIII} \S 4.4 for the cohomology of the Borel-Serre boundary. 
Although the two filtrations are for different spaces, 
one for ${\Gr}^{W}_{\bullet}H^{\ast}(X)$ and one for the boundary cohomology, 
it would be desirable to understand their relation. 

The rest of this paper is organized as follows. 
\S \ref{sec: MHS} and \S \ref{sec: modular} are recollections of 
mixed Hodge theory and toroidal compactifications respectively. 
In \S \ref{sec: corank ss}, we prove Theorem \ref{thm: main intro}. 
In \S \ref{sec: complement}, we derive some consequences and extensions. 
In \S \ref{sec: example 1}, we look at a few classical examples. 
In Appendix \ref{sec: appendix}, we prove a technical result in simplicial topology which was used in \S \ref{sec: corank ss}. 

Throughout this paper, a cohomological spectral sequence will be written as $(E_{r}^{p,q}, d_{r})$, 
while a homological spectral sequence will be written as $(E^{r}_{p,q}, d^{r})$. 
The direction of differentials is given by 
$d_{r}\colon E_{r}^{p,q} \to E_{r}^{p+r, q-r+1}$ and 
$d^{r}\colon E^{r}_{p,q} \to E^{r}_{p-r, q+r-1}$ respectively. 
We refer to \cite{Mac} Chapter XI for a comprehensive account of both types of spectral sequences.


\section{Mixed Hodge theory}\label{sec: MHS}

In this section we recall some basic facts from mixed Hodge theory which we will need later. 
A mixed Hodge structure on a ${\Q}$-linear space $V$ of finite dimension 
is a pair of an increasing filtration $W_{\bullet}$ on $V$ (the weight filtration) 
and a decreasing filtration $F^{\bullet}$ on $V_{{\C}}$ (the Hodge filtration) such that 
$F^{\bullet}$ induces a pure Hodge structure of weight $l$ on 
the weight graded quotient ${\Gr}_{l}^{W}V=W_{l}V/W_{l-1}V$ for every $l$. 
By Deligne \cite{DeII}, \cite{DeIII}, the singular cohomology $H^k(X)=H^k(X, {\Q})$ 
of a complex algebraic variety $X$ has a canonical mixed Hodge structure. 
In \S \ref{ssec: MHS smooth}, we recall the weight spectral sequence for smooth varieties. 
In \S \ref{ssec: cellular sheaf}, we explain an interpretation via combinatorial topology. 
Throughout this paper, the $r$-th Tate twist of a pure Hodge structure $V$ is denoted by $V(r)$ .

\subsection{Weight spectral sequence}\label{ssec: MHS smooth}

Let $X$ be an irreducible smooth quasi-projective variety of dimension $n$. 
We take a smooth projective compactification $X\subset \overline{X}$ such that 
the complement $D=\overline{X}-X$ is a simple normal crossing (SNC) divisor. 
We denote by $D=\sum_i D_i$ the irreducible decomposition of $D$, 
where a numbering of the irreducible components is chosen. 
For a multi-index $I=(i_1< \cdots < i_m)$ we write $D_{I}=D_{i_{1}}\cap \cdots \cap D_{i_{m}}$. 
For $1\leq j \leq m$ let 
$\rho_{j}^{I}\colon D_{I}\hookrightarrow D_{I\backslash \{ i_{j} \} }$ be the inclusion map. 
We put $D(m)=\bigsqcup_{|I|=m}D_{I}$. 

We have the weight spectral sequence 
\begin{equation*}
E_{1}^{-m,k+m} = H^{k-m}(D(m))(-m) \quad \Rightarrow \quad 
E_{\infty}^{-m,k+m}= {\Gr}^{W}_{k+m}H^k(X) 
\end{equation*}
which degenerates at $E_2$ 
(\cite{DeII}, \cite{PS} \S 4.2). 
The differential 
$d_1\colon E_{1}^{-m,k+m} \to E_{1}^{-m+1,k+m}$, 
after multiplied by $-1$, is identified with the alternating sum of the Gysin maps 
\begin{equation}\label{eqn: d1 smooth} 
\bigoplus_{|I|=m} \sum_{j=1}^{m} (-1)^{j-1} (\rho_{j}^{I})_{\ast} :  
H^{k-m}(D(m)) \to H^{k-m+2}(D(m-1)). 
\end{equation}
This shows that ${\Gr}_{F}^{p}{\Gr}^{W}_{p+q}H^k(X)\ne 0$ only in the triangle 
\begin{equation}\label{eqn: Hodge triangle}
p+q \geq k, \quad p\leq \min(k, n), \quad  q\leq \min(k, n). 
\end{equation}

Later we will use a part of this $E_1$ page. 
We write it explicitly for the convenience of reference. 

\begin{lemma}\label{lem: Fn E1}
Let $m, p\geq 0$ with $m+p\leq n$. 
Then $F^n{\Gr}^{W}_{2n-p}H^{2n-p-m}(X)$ is isomorphic to the cohomology of the complex 
\begin{equation}\label{eqn: Fn E1}
\cdots \to H^0(\Omega_{D(m+1)}^{p})^{\ast} \to H^0(\Omega_{D(m)}^{p})^{\ast} \to H^0(\Omega_{D(m-1)}^{p})^{\ast} \to \cdots 
\end{equation}
at $H^0(\Omega_{D(m)}^{p})^{\ast}$. 
Here the differentials are the complex conjugate of the dual of the signed restriction maps 
$H^0(\Omega_{D(m)}^{p}) \to H^0(\Omega_{D(m+1)}^{p})$. 
\end{lemma}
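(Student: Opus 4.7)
The plan is to deduce the lemma from the weight spectral sequence for $X \subset \overline{X}$ recalled just above, by first isolating the column relevant for $F^n {\Gr}^W_{2n-p} H^{2n-p-m}(X)$, then computing the $F^n$ part of each $E_1$ term using that $D(m)$ is smooth projective of dimension $n-m$, and finally identifying $d_1$ restricted to these extreme Hodge pieces with the conjugate-dual of the restriction maps via Serre duality. For the first step, I set $k + m = 2n - p$ and look at the column $E_r^{-m,\,2n-p}$. Since the weight spectral sequence is one of mixed Hodge structures, the Hodge filtration is strict with respect to $d_1$, so that
\begin{equation*}
F^n {\Gr}^W_{2n-p} H^{2n-p-m}(X) \;\cong\; \ker(F^n d_1)\,/\,\im(F^n d_1)
\end{equation*}
at the $(-m)$ column, using that the spectral sequence degenerates at $E_2$.

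For the $E_1$ terms, the relation $V(-m)^{a,b} = V^{a-m,\,b-m}$ gives $F^n E_1^{-m,\,2n-p} = F^{n-m} H^{2(n-m)-p}(D(m))$. Because $D(m)$ has pure dimension $n-m$, the Hodge decomposition of $H^{2(n-m)-p}(D(m))$ places every class in $F^{n-m}$ into the single bidegree $(n-m,\,n-m-p)$, so this term equals $H^{n-m-p}(D(m),\,\Omega^{n-m}_{D(m)})$. Serre duality on $D(m)$ identifies it with $H^p(D(m),\,\mathcal{O}_{D(m)})^\vee$, and Hodge symmetry $H^p(\mathcal{O}_V) = \overline{H^0(\Omega^p_V)}$ then rewrites it as $H^0(\Omega^p_{D(m)})^*$ in the paper's notation.

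For the differentials, $d_1$ is the alternating sum of Gysin maps $(\rho_j^I)_*$ as in \eqref{eqn: d1 smooth}. For each codimension-one inclusion $\rho\colon Y \hookrightarrow Z$ of smooth projective varieties, the Gysin map on the top Hodge piece is Serre-dual to the pullback $\rho^{\ast}\colon H^p(\mathcal{O}_Z) \to H^p(\mathcal{O}_Y)$, which under complex conjugation becomes the conjugate-dual of the restriction $H^0(\Omega^p_Z) \to H^0(\Omega^p_Y)$. Assembling these isomorphisms over the pairs $(I,j)$ produces the complex \eqref{eqn: Fn E1} with the asserted signed-restriction differentials. The main obstacle I expect is the coherent matching of Serre duality, Gysin--restriction duality, and complex conjugation, so that the $(-1)^{j-1}$ signs of $d_1$ transport unchanged through all three identifications; once compatible normalizations of Serre trace maps and conjugation conventions are fixed, the rest of the argument is essentially formal.
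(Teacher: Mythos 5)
Your proposal is correct and follows essentially the same route as the paper: extract the relevant column of the weight spectral sequence, take the $F^n$-part (which for dimension reasons isolates the single Hodge piece $H^{n-m,\,n-m-p}(D(m))$), identify that piece with $H^0(\Omega^p_{D(m)})^{\ast}$, and identify the Gysin maps with the conjugate-duals of the restriction maps. The paper simply states these last two identifications as known facts, whereas you spell them out via Serre duality and Hodge symmetry; no substantive difference.
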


\begin{proof}
By the weight spectral sequence, 
${\Gr}^{W}_{2n-p}H^{2n-p-m}(X)=E_{\infty}^{-m, 2n-p}$ is the middle cohomology of 
\begin{equation*}
H^{2(n-m-1)-p}(D(m+1)) \to H^{2(n-m)-p}(D(m)) \to H^{2(n-m+1)-p}(D(m-1)).  
\end{equation*}
Therefore $F^{n}{\Gr}^{W}_{2n-p}H^{2n-p-m}(X)$ is the middle cohomology of 
\begin{equation*}
H^{{\rm top}, {\rm top}-p}(D(m+1)) \to H^{{\rm top}, {\rm top}-p}(D(m)) \to H^{{\rm top}, {\rm top}-p}(D(m-1)).  
\end{equation*}
It remains to notice that 
$H^{{\rm top}, {\rm top}-p}(V)$ of an irreducible smooth projective variety $V$ 
is canonically isomorphic to $H^0(\Omega_{V}^{p})^{\ast}$, 
and the Gysin map  
$H^{{\rm top}, {\rm top}-p}(V) \to H^{{\rm top}, {\rm top}-p}(W)$ 
for $V\subset W$ is the complex conjugate of the dual of the restriction map 
$H^0(\Omega_{W}^{p}) \to H^0(\Omega_{V}^{p})$. 
\end{proof}

\subsection{Interpretation via cellular cosheaves}\label{ssec: cellular sheaf}

In this subsection, we explain that the complex \eqref{eqn: Fn E1} can be interpreted in terms of 
a so-called cellular cosheaf on the dual complex of $D$. 
In essence, the appearance of cellular cosheaves in mixed Hodge theory should have been well-known.  
However, since we could not find a suitable reference in this language, let us give an account here. 

First we recall the basic theory of cellular cosheaves, 
referring the reader to \cite{Cu} Part II for more details and background. 
Classically they were known as ``coefficient systems" (see, e.g.,~\cite{Brown} p.167).  
We restrict ourselves to $\Delta$-complexes in the sense of \cite{Hatcher} \S 2.1, 
as they are the only CW complexes appearing in this paper. 

Let $\Delta$ be a finite $\Delta$-complex. 
A \textit{cellular cosheaf} on $\Delta$ is a contravariant functor $\mathcal{F}$ from 
the poset category of $\Delta$ to the category of (say) ${\C}$-linear spaces. 
This means that a ${\C}$-linear space $\mathcal{F}(\sigma)$ is assigned to each simplex $\sigma$ of $\Delta$, 
and for $\sigma\preceq \tau$ we have an ``extension map'' 
$ \mathcal{F}(\tau) \to \mathcal{F}(\sigma)$ 
which satisfies the usual compatibility conditions (dual of the axioms for presheaves). 
The chain complex $C_{\bullet}(\Delta, \mathcal{F})$ of $\mathcal{F}$ is defined by 
\begin{equation*}
C_{k}(\Delta, \mathcal{F}) = \bigoplus_{\dim \sigma =k} \mathcal{F}(\sigma), 
\end{equation*}
and the differential $C_{k}(\Delta, \mathcal{F})\to C_{k-1}(\Delta, \mathcal{F})$ 
is the alternating sum of the extension maps 
where the signs are the same as those in the ordinary simplicial homology. 
Then the homology of $\mathcal{F}$ is defined by 
\begin{equation*}
H_{k}(\Delta, \mathcal{F}) = H_{k}(C_{\bullet}(\Delta, \mathcal{F})). 
\end{equation*}

We can also consider relative homology. 
Let $\Delta'\subset \Delta$ be a sub $\Delta$-complex. 
Then $C_{\bullet}(\Delta', \mathcal{F})$ is a subcomplex of $C_{\bullet}(\Delta, \mathcal{F})$. 
The relative chain complex is defined as the quotient complex 
\begin{equation*}
C_{\bullet}(\Delta, \Delta'; \mathcal{F}) = C_{\bullet}(\Delta, \mathcal{F}) / C_{\bullet}(\Delta', \mathcal{F}). 
\end{equation*}
Naturally, we can write  
\begin{equation}\label{eqn: relative chain complex}
C_{k}(\Delta, \Delta'; \mathcal{F}) \simeq 
\bigoplus_{\substack{\dim \sigma=k \\ \sigma\not\subset \Delta'}} \mathcal{F}(\sigma). 
\end{equation}
Then the relative homology with coefficients in $\mathcal{F}$ is defined by 
\begin{equation*}
H_{k}(\Delta, \Delta'; \mathcal{F}) = H_{k}(C_{\bullet}(\Delta, \Delta'; \mathcal{F})). 
\end{equation*}

We go back to the situation $X\subset \overline{X}$, $D=\overline{X}-X$ in \S \ref{ssec: MHS smooth}. 
The dual complex of the SNC divisor $D$ is a $\Delta$-complex defined as follows (cf.~\cite{Pa} \S 2). 
To each irreducible component $D_i$ of $D$ we associate one vertex $v_i$. 
More generally, we associate an $(m-1)$-simplex $\sigma$ 
to each connected component (say $D_{\sigma}$) of a non-empty stratum $D_{I}=D_{i_1}\cap \cdots \cap D_{i_m}$ of $D(m)$. 
The vertices of $\sigma$ are $v_{i_1}, \cdots, v_{i_m}$, 
and the identification of $\sigma$ with the standard $(m-1)$-simplex (and hence an orientation of $\sigma$) 
is defined by the ordering $i_1<\cdots <i_m$. 
A simplex $\sigma$ is identified with a face of another simplex $\tau$ when $D_{\sigma}\supset D_{\tau}$. 
In this way we obtain a $\Delta$-complex, called the \textit{dual complex} of $D$ and denoted by ${\DD}$. 
Note that ${\DD}$ is not necessarily a simplicial complex because 
two different simplices may share the same set of vertices, or in other words, 
$D_{I}$ may have several connected components.  

Now we interpret the complex \eqref{eqn: Fn E1} in terms of cellular cosheaves. 
Let $p<n$. 
Let ${\Fp}$ be the cellular cosheaf on ${\DD}$ 
which associates $H^0(\Omega_{D_{\sigma}}^{p})^{\ast}$ to each simplex $\sigma$, 
and for $\sigma \preceq \tau$ the ``extension map'' 
$H^0(\Omega_{D_{\tau}}^{p})^{\ast} \to H^0(\Omega_{D_{\sigma}}^{p})^{\ast}$ 
is the conjugate dual of the restriction map 
$H^0(\Omega_{D_{\sigma}}^{p})\to H^0(\Omega_{D_{\tau}}^{p})$ 
for $D_{\tau}\subset D_{\sigma}$. 

\begin{lemma}\label{lem: cellular sheaf interpret}
The complex \eqref{eqn: Fn E1} truncated in $m\geq 1$ is identified with 
the chain complex of the cellular cosheaf ${\Fp}$ shifted by $1$. 
Hence we have an isomorphism 
\begin{equation*}
F^{n}{\Gr}^{W}_{2n-p}H^{2n-p-m}(X) \simeq H_{m-1}({\DD}, {\Fp}) 
\end{equation*}
if $m\geq 2$. 
When $m=1$, we have an exact sequence  
\begin{eqnarray}\label{eqn: m=1}
& & 0 \to F^{n}{\Gr}^{W}_{2n-p}H^{2n-p-1}(X) \to H_{0}({\DD}, {\Fp}) \\ 
& & \quad \quad \quad \to H^{0}(\Omega^{p}_{\overline{X}})^{\ast} \to F^{n}{\Gr}^{W}_{2n-p}H^{2n-p}(X) \to 0.  \nonumber
\end{eqnarray}
\end{lemma}

\begin{proof}
By the definition of ${\Fp}$, we have 
\begin{equation*}
H^0(\Omega_{D(m)}^{p})^{\ast} 
= \bigoplus_{\dim \sigma = m-1} H^0(\Omega_{D_{\sigma}}^{p})^{\ast} 
= \bigoplus_{\dim \sigma = m-1} {\Fp}(\sigma) 
= C_{m-1}({\DD}, {\Fp}). 
\end{equation*}
Both $H^0(\Omega_{D(m+1)}^{p})^{\ast} \to H^0(\Omega_{D(m)}^{p})^{\ast}$ and 
$C_{m}({\DD}, {\Fp}) \to C_{m-1}({\DD}, {\Fp})$ 
are signed sum of the Gysin maps 
$H^0(\Omega_{D_{\tau}}^{p})^{\ast} \to H^0(\Omega_{D_{\sigma}}^{p})^{\ast}$ 
for $\sigma \prec \tau$. 
The signs agree because the sign $(-1)^{j-1}$ in \eqref{eqn: d1 smooth} 
is exactly that in the simplicial homology. 
Hence the two complexes agree. 

As for the assertion in the case $m=1$, 
we rewrite the complex \eqref{eqn: Fn E1} at $m\leq 2$ as 
\begin{equation*}
C_{1}({\DD}, {\Fp}) \stackrel{\partial_{1}}{\to} C_{0}({\DD}, {\Fp}) \stackrel{\partial_{0}}{\to} 
H^{0}(\Omega^{p}_{\overline{X}})^{\ast} \to 0. 
\end{equation*}
Then the exact sequence 
\begin{equation*}
0 \to \ker(\partial_{0})/\im(\partial_{1}) \to \coker(\partial_{1}) \stackrel{\partial_{0}}{\to} 
H^{0}(\Omega^{p}_{\overline{X}})^{\ast} \to \coker(\partial_{0}) \to 0 
\end{equation*}
gives \eqref{eqn: m=1}. 
\end{proof}
 
\begin{remark}\label{remark: cosheaf Hp}
We can also consider the more basic cellular cosheaf $\mathcal{H}_{p}$ 
which assigns the pure Hodge structure of weight $-p$ 
\begin{equation*}
\mathcal{H}_{p}(\sigma) := H_{p}(D_{\sigma}) = H^{2d-p}(D_{\sigma})(d) = H^{p}(D_{\sigma})^{\vee} 
\end{equation*}
to a simplex $\sigma$ where $d=\dim D_{\sigma}$. 
The ``extension maps'' are the Gysin maps for $H_{p}$ (dual of the pullback for $H^{p}$). 
Then we have similarly 
\begin{equation*}
{\Gr}^{W}_{2n-p}H^{2n-p-m}(X)\simeq H_{m-1}({\DD}, \mathcal{H}_{p})(-n)  
\end{equation*}
for $m\geq 2$. 
Except for the cases $p=0, 1$ (\S \ref{ssec: small p}), 
this cosheaf will not be used later because the fact that $H_{p}$ is not birationally invariant when $p>1$ 
makes it hard to calculate the $E^1$ page. 
\end{remark}


\section{Toroidal compactifications}\label{sec: modular}

In this section we recall the basic theory of toroidal compactifications of locally symmetric varieties following \cite{AMRT}.

\subsection{Baily-Borel compactification}\label{ssec: BB}

Let ${\D}$ be a Hermitian symmetric domain of dimension $n$ and 
$\mathbb{G}$ be a connected semisimple linear algebraic group over ${\Q}$ such that 
$\mathbb{G}({\R})^{\circ} = {\aut}({\D})^{\circ}=:G$, 
where $\circ$ means the identity component in the classical topology. 
A rational boundary component of ${\D}$ is called a \textit{cusp} of ${\D}$. 
For two cusps $F\ne F'$, we write $F\prec F'$ if $F$ is in the closure of $F'$ in the Satake topology. 
We write $F\preceq F'$ when we want to leave the possibility $F=F'$. 

For simplicity, we assume that $\mathbb{G}$ is ${\Q}$-simple throughout this paper.  
We denote by $r$ the ${\Q}$-rank of $\mathbb{G}$. 
Then there exists a flag $F_1\succ \cdots \succ F_r$ of reference cusps of ${\D}$ such that 
every cusp $F$ is equivalent to one of $F_i$ under the action of $\mathbb{G}({\Q})$; 
more precisely, the corresponding ${\Q}$-parabolic subgroups are conjugate under $\mathbb{G}({\Q})$ 
(see \cite{BB} Theorem 3.8 and \cite{AMRT} p.141). 
We call the index $i$ the \textit{corank} of $F$, and denote it by ${\rm cork}(F)$. 
This is the only place where we use the ${\Q}$-simplicity. 
See Remark \ref{remark: non Q-simple} for the general case.

Let ${\G}$ be a neat arithmetic subgroup of $G$. 
The quotient $X={\D}/{\G}$ has the structure of a smooth quasi-projective variety, 
embedded in the Baily-Borel compactification 
$X^{bb}={\D}^{\ast}/{\G}$ as a Zariski open set (\cite{BB}). 
Here ${\D}^{\ast}$ is the union of ${\D}$ and all cusps, equipped with the Satake topology. 
Throughout this paper, we assume that the boundary of $X^{bb}$ has codimension $>1$. 
We write $X_{F}$ for the image of a cusp $F$ in $X^{bb}$ 
and use the terminology ``cusp'' also for $X_{F}$.

\subsection{Stabilizer of a cusp}\label{ssec: stabilizer}

Let $F$ be a cusp of ${\D}$. 
We denote by $N(F)<G$ the stabilizer of $F$ in $G$, 
$W(F)<N(F)$ the unipotent radical of $N(F)$, and 
$U(F)<W(F)$ the center of $W(F)$. 
Then $U(F)$ is an ${\R}$-linear space acted on by $N(F)$ by conjugation. 
We denote by $N(F)'$ the kernel of this adjoint action 
$N(F)\to {\aut}(U(F))$. 
Thus we have the filtration 
\begin{equation}\label{eqn: N(F)}
U(F) \lhd W(F) \lhd N(F)' \lhd N(F). 
\end{equation}
All these groups are defined over ${\Q}$ (\cite{AMRT} p.174). 
The quotient $V(F)=W(F)/U(F)$ is an ${\R}$-linear space. 
We have a natural map 
$N(F)'/W(F)\to {\aut}(F)$ with compact kernel and finite cokernel. 

The linear space $U(F)$ contains a distinguished open homogeneous self-adjoint cone $C(F)$. 
We denote by ${\CF}=C(F)/{\R}_{>0}^{\times}$ its projectivization. 
When $F\prec F'$, then $U(F')\subset U(F)$ and $C(F')$ is a rational boundary component of $C(F)$. 
We denote by $C(F)^{\ast}\subset U(F)$ the union of $C(F)$ and all such boundary components $C(F')$ 
(including $\{ 0 \}$ for which $F'={\D}$ by convention). 
For each $1\leq i \leq r$, we set $n(i)=\dim U(F_i)$. 
Then $0<n(1)< \cdots < n(r)\leq n$. 
We also put $n(0)=0$. 

For the given neat group ${\G}$, we denote by  
\begin{equation*}
{\UFZ} \lhd W(F)_{{\Z}} \lhd {\G}(F)' \lhd {\GFZ} 
\end{equation*}
the intersection of ${\G}$ with \eqref{eqn: N(F)}. 
In particular, ${\GFZ}$ is the stabilizer of $F$ in ${\G}$, 
and ${\UFZ}$ is a full lattice in $U(F)$. 
We will also consider the following subquotients:  
\begin{equation*}
\overline{{\GFZ}}={\GFZ}/{\UFZ}, \qquad \overline{{\GFZ}}'={\G}(F)'/{\UFZ}, 
\end{equation*}
\begin{equation*}\label{eqn: GF} 
{\GF}={\rm Im}({\GFZ}\to {\aut}(U(F))) \simeq {\GFZ}/{\G}(F)', 
\end{equation*}
\begin{equation*}
{\GFh}={\rm Im}({\GFZ}\to {\aut}(F)), \qquad {\GFhd}={\rm Im}({\G}(F)'\to {\aut}(F)). 
\end{equation*}
Then ${\GF}$ is a neat subgroup of ${\aut}(C(F))$, 
and ${\GFh}, {\GFhd}$ are neat subgroups of ${\aut}(F)$. 
Note that ${\GFhd}\simeq {\G}(F)'/W(F)_{{\Z}}$ by the neatness of ${\G}(F)'/W(F)_{{\Z}}$ (cf.~\cite{AMRT} p.174), 
so we have the exact sequence 
\begin{equation}\label{eqn: GFZd}
0 \to V(F)_{{\Z}} \to \overline{{\GFZ}}' \to {\GFhd} \to 1, 
\end{equation}
where $V(F)_{{\Z}}=W(F)_{{\Z}}/{\UFZ}$ is a full lattice in $V(F)$. 

The distinction of ${\GFh}$ and ${\GFhd}$ is somewhat subtle (and has been sometimes overlooked). 
If we denote by ${\G}_F$ the image of the natural map 
${\GFZ} \to {\aut}(U(F))\times {\aut}(F)$, 
then ${\GFhd}$ is the intersection ${\G}_F\cap {\aut}(F)$, 
while ${\GFh}$ is the image of the projection ${\G}_F\to {\aut}(F)$. 
Therefore, if we write ${\G}_{F,\ell}'={\G}_F\cap {\aut}(U(F))$, we have 
\begin{equation}\label{eqn: GFhl'}
{\GFh}/{\GFhd} \simeq {\G}_{F}/({\GFhd}\times {\G}_{F,\ell}') \simeq {\GF}/{\G}_{F,\ell}'. 
\end{equation}


\subsection{Siegel domain realization}\label{ssec: Siegel domain}

Let $F$ be a cusp of ${\D}$. 
The Siegel domain realization of ${\D}$ in the direction of $F$ is an extended $N(F)$-equivariant two-step fibration 
\begin{equation*}
{\D} \subset {\D}(F) \to \mathcal{V}_{F} \to F 
\end{equation*}
where ${\D}(F) \to \mathcal{V}_{F}$ is a principal $U(F)_{{\C}}$-bundle, 
the fibers of ${\D} \to \mathcal{V}_{F}$ are the tube domain $U(F)+iC(F)$ in $U(F)_{{\C}}$ up to base point,  
and $\mathcal{V}_{F} \to F$ is an affine space bundle. 
In the $C^{\infty}$-category, $\mathcal{V}_{F} \to F$ is a principal $V(F)$-bundle. 
We set 
\begin{equation*}
T(F)=U(F)_{{\C}}/{\UFZ}, \quad \mathcal{X}(F)={\D}/{\UFZ}, \quad \mathcal{T}(F)={\D}(F)/{\UFZ}. 
\end{equation*}
Then $T(F)$ is an algebraic torus and $\mathcal{T}(F)$ is a principal $T(F)$-bundle over $\mathcal{V}_{F}$. 
We also put 
\begin{equation*}
Y_{F} = \mathcal{V}_{F}/\overline{{\G}(F)}', \qquad X_{F}'=F/{\GFhd}. 
\end{equation*}
Then $X_{F}'$ is a finite cover of $X_F=F/{\GFh}$. 
By \eqref{eqn: GFZd}, $Y_{F}$ is a smooth abelian fibration over $X_{F}'$ 
whose fibers are $C^{\infty}$-isomorphic to $V(F)/V(F)_{{\Z}}$. 
We call $Y_{F}$ the \textit{boundary Kuga variety} over $F$. 
Since ${\GFZ}$ acts on $\mathcal{V}_{F}\to F$ equivariantly, 
the quotient ${\GF}\simeq {\GFZ}/{\G}(F)'$ acts on $Y_{F}\to X_{F}'$ equivariantly. 
The action of ${\GF}$ on $X_{F}'$ is through the finite quotient \eqref{eqn: GFhl'}.

\subsection{Toroidal compactification}\label{ssec: toroidal}

Let $\Sigma=(\Sigma_{F})_{F}$ be a ${\G}$-admissible collection of fans in the sense of \cite{AMRT} Definition III.5.1. 
Here $F$ ranges over all cusps of ${\D}$, 
and each $\Sigma_{F}$ is a ${\GF}$-admissible rational polyhedral cone decomposition of $C(F)^{\ast}$ 
satisfying suitable compatibility conditions. 
We may choose $\Sigma$ so that each $\Sigma_{F}$ is \textit{regular}, 
i.e., every cone $\sigma\in \Sigma_{F}$ is generated by a part of a ${\Z}$-basis of ${\UFZ}$. 
We call a cone $\sigma\in \Sigma_{F}$ an \textit{$F$-cone} if 
its interior is contained in $C(F)$. 
We denote by $\Sigma_{F}^{\circ}\subset \Sigma_{F}$ the set of all $F$-cones.  
If $F\prec F'$, we have $\Sigma_{F'}\subset \Sigma_{F}$ via the inclusion $C(F')^{\ast}\subset C(F)^{\ast}$. 
Then 
$\Sigma_{F}^{\circ} \subset \Sigma_{F}$ 
is the complement of $\bigcup_{F'\succ F}\Sigma_{F'}$. 

Let $T(F)^{\Sigma_{F}}$ be the torus embedding of $T(F)$ defined by the fan $\Sigma_{F}$, 
and $\mathcal{X}(F)^{\Sigma_{F}}$ be the interior of the closure of $\mathcal{X}(F)$ in 
$\mathcal{T}(F)\times_{T(F)}T(F)^{\Sigma_{F}}$. 
The toroidal compactification of $X={\D}/{\G}$ associated to $\Sigma$ is defined by the gluing 
\begin{equation*}
{\Xcpt} = \left( {\D}\sqcup \bigsqcup_{F} \mathcal{X}(F)^{\Sigma_{F}} \right) / \sim, 
\end{equation*}
where $\sim$ is the equivalence relation generated by 
the ${\G}$-actions ${\D}\to {\D}$, 
$\mathcal{X}(F)^{\Sigma_{F}}\to \mathcal{X}(\gamma F)^{\Sigma_{\gamma F}}$ 
and the projections 
${\D}\to \mathcal{X}(F)^{\Sigma_{F}}$, 
$\mathcal{X}(F')^{\Sigma_{F'}} \to \mathcal{X}(F)^{\Sigma_{F}}$ for $F\prec F'$. 

It is a fundamental theorem of Ash-Mumford-Rapoport-Tai \cite{AMRT} that 
${\Xcpt}$ is a compact Moishezon space containing $X$ as a Zariski open set, 
and the identity of $X$ extends to a morphism ${\Xcpt}\to X^{bb}$. 
When $\Sigma$ is regular, ${\Xcpt}$ is smooth and the boundary divisor $D={\Xcpt}-X$ is normal crossing. 
We may refine $\Sigma$ so that ${\Xcpt}$ is furthermore projective (\cite{AMRT} Chapter IV) 
and $D$ is simple normal crossing (see \cite{Ma2} Appendix). 
In what follows, $\Sigma$ is assumed to be chosen so.

\subsection{Structure of the boundary}\label{ssec: boundary}

We describe the structure of the SNC boundary divisor $D$ 
following \cite{AMRT} p.184 and \cite{Ma2} \S 4.1, \S 6.3. 
Let $F$ be a cusp of ${\D}$. 
It will be useful to consider the intermediate quotient 
\begin{equation*}
\mathcal{Y}(F)^{\Sigma_{F}} = \mathcal{X}(F)^{\Sigma_{F}}/\overline{{\G}(F)}', 
\end{equation*}
which fits into the diagram 
\begin{equation*}
\xymatrix@C=36pt{
\mathcal{X}(F)^{\Sigma_{F}} \ar[r]^{/\overline{{\G}(F)}'} \ar[d] & \mathcal{Y}(F)^{\Sigma_{F}}  \ar[d] \ar[r]^(0.44){/{\GF}} 
& \mathcal{X}(F)^{\Sigma_{F}}/\overline{{\GFZ}}  \ar[r] & {\Xcpt} \\ 
\mathcal{V}_{F} \ar[r]^{/\overline{{\G}(F)}'}  & Y_{F}  
}
\end{equation*}
The last map $\mathcal{X}(F)^{\Sigma_{F}}/\overline{{\GFZ}} \to {\Xcpt}$ 
is an isomorphism in a neighborhood of the $F$-stratum in ${\Xcpt}$ (\cite{AMRT} p.175). 
%
The stratification of $T(F)^{\Sigma_{F}}$ induces that of the boundary of $\mathcal{Y}(F)^{\Sigma_{F}}$ of the form  
\begin{equation*}
\mathcal{Y}(F)^{\Sigma_{F}} - \mathcal{Y}(F) = \bigsqcup_{\sigma\in \Sigma_{F}} {\DFs} 
\end{equation*}
where $\mathcal{Y}(F)=\mathcal{X}(F)/\overline{{\G}(F)}'$. 
A stratum ${\DFs}$ is mapped into the $F$-stratum in ${\Xcpt}$ if and only if $\sigma\in \Sigma_{F}^{\circ}$. 
For such $\sigma$, 
the projection $\mathcal{Y}(F)^{\Sigma_{F}}\to Y_{F}$ makes 
${\DFs}$ a principal torus bundle over $Y_{F}$ 
for the quotient torus of $T(F)$ associated to $\sigma$. 
The union $\bigsqcup_{\sigma'\succeq \sigma}D_{F,\sigma'}$ 
forms a smooth projective toric fibration over $Y_{F}$. 

The remaining group ${\GF}$ acts on $\mathcal{Y}(F)^{\Sigma_{F}}\to Y_{F}$ equivariantly 
and permutes the strata ${\DFs}$ according to its action on $\Sigma_{F}$. 
Since ${\GF}$ acts on $\Sigma_{F}^{\circ}$ freely, 
the map ${\DFs}\to {\Xcpt}$ is injective if $\sigma\in \Sigma_{F}^{\circ}$. 
We denote by ${\DFS}\subset {\Xcpt}$ its image, 
where $[\sigma]\in \Sigma_{F}^{\circ}/{\GF}$ stands for the ${\GF}$-equivalence class of $\sigma$. 
We have a natural projection 
\begin{equation}\label{eqn: DFS to YF}
{\DFS}\simeq {\DFs} \to Y_{F}, 
\end{equation}
but note that this depends on the choice of $\sigma\in \Sigma_{F}^{\circ}$ representing $[\sigma]$. 
If we use $\gamma \sigma$ with $\gamma\in {\GF}$ instead, 
this projection changes by the $\gamma$-action on $Y_{F}$. 

Now the toroidal boundary divisor $D$ is stratified as 
\begin{equation*}\label{eqn: toroidal stratification}
D =\bigsqcup_{F, [\sigma]} {\DFS}, 
\end{equation*}
where $(F, [\sigma])$ ranges over ${\G}$-equivalence classes of pairs of a cusp $F$ and an $F$-cone $\sigma$, 
or equivalently, 
pairs of a ${\G}$-equivalence class of cusps $F$ and a ${\GF}$-equivalence class of $F$-cones. 
The latter justifies our notation $(F, [\sigma])$. 

The closure ${\DFScpt}\subset {\Xcpt}$ of each stratum ${\DFS}$ is a connected component of the intersection of 
the boundary divisors corresponding to the rays of $\sigma$, 
and conversely, every connected component of $D(m)$ is of this form (\cite{Ma2} Lemma 4.3). 
This shows that 
\begin{equation}\label{eqn: D(m) toroidal}
D(m) = \bigsqcup_{\substack{F, [\sigma]\\ \dim \sigma =m}} {\DFScpt}. 
\end{equation}
The adjacent relation between the strata is given by 
\begin{equation}\label{eqn: toroidal stratify adjacent}
{\DFScpt} = {\DFS} \sqcup \bigsqcup_{F',[\sigma']}D_{F', [\sigma']}, 
\end{equation}
where $(F', [\sigma'])$ ranges over all pairs with $F'\preceq F$ and $\sigma'\succ \sigma$ 
(via the inclusion $U(F)\subseteq U(F')$).


\section{The corank spectral sequences}\label{sec: corank ss}

Let $X={\D}/{\G}$ be a locally symmetric variety of dimension $n$ as in \S \ref{sec: modular}. 
In this section we construct a series of spectral sequences that compute 
the weight graded quotients of $F^{n}H^{2n-\ast}(X)$. 
This is the central part of this paper. 

We prepare some notations. 
Let $0\leq p<n$. 
We denote by $0\leq d(p) \leq r$ the maximal corank with $n(d(p))\leq n-p$. 
By \cite{Ma2} Theorem 4.2, 
we have ${\Gr}^{W}_{2n-p}F^{n}H^{2n-p-m}(X)=0$ if $m> n(d(p))$. 
See Figure \ref{figure: weight ss D} for a shape of this condition in the case $n(r)=n$.

\begin{figure}[h]
\includegraphics[height=55mm, width=75mm]{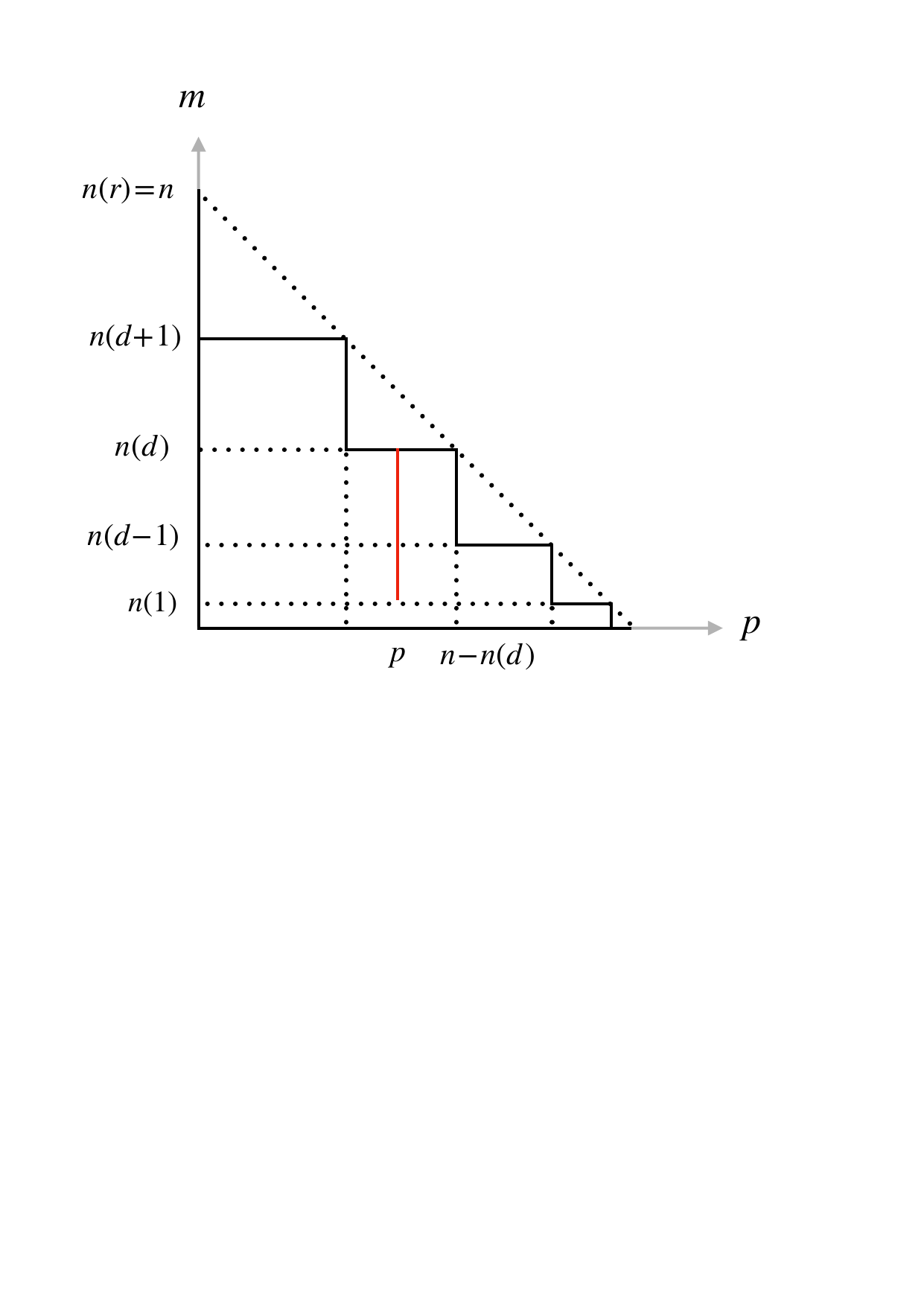}
\caption{${\Gr}^{W}_{2n-p}F^{n}H^{2n-p-m}(X)$\label{figure: weight ss D}}
\end{figure}

For each $1\leq i \leq r$, we denote by $\mathcal{C}(i)$ the set of ${\G}$-equivalence classes of cusps of corank $i$. 
For a cusp $F$, we write ${\YFcpt}$ for an arbitrary smooth projective model of the Kuga variety $Y_F$. 
Since ${\GF}$ acts on $Y_{F}$ and $H^0(\Omega_{{\YFcpt}}^{p})$ is a birational invariant, 
$H^0(\Omega_{{\YFcpt}}^{p})$ is a ${\GF}$-module. 

By a theorem of Pommerening \cite{Po}, 
we have $H^0(\Omega^{p}_{X})=H^0(\Omega^{p}_{{\Xcpt}})$ when $p<n$,  
where ${\Xcpt}$ is an arbitrary smooth projective toroidal compactification of $X$. 
Assuming that the boundary divisor $D={\Xcpt}-X$ is SNC, 
we consider the quotient space 
\begin{equation*}
H^0(\Omega^{p}_{X})_{{\rm Eis}} := H^0(\Omega^{p}_{{\Xcpt}})/ \ker( H^0(\Omega^{p}_{{\Xcpt}}) \to H^0(\Omega^{p}_{D(1)})), 
\end{equation*}
where $D(1)$ is the normalization of $D$ (as in \S \ref{ssec: MHS smooth}) and 
$H^0(\Omega^{p}_{{\Xcpt}}) \to H^0(\Omega^{p}_{D(1)})$ 
is the restriction of holomorphic forms. 
This space does not depend on the choice of ${\Xcpt}$ 
and can be regarded as the Eisenstein part of $H^0(\Omega^{p}_{X})$. 
In \cite{Ma3} Proposition 5.3, it is proved that 
a holomorphic $p$-form on $X$ vanishes at $D(1)$ in the above sense if and only if 
its restriction to the Borel-Serre boundary $\partial X^{bs}$ as a cohomology class vanishes. 
Hence we also have 
\begin{equation*}
H^0(\Omega^{p}_{X})_{{\rm Eis}} \: \simeq \: 
{\rm Im}(H^{0}(\Omega^{p}_{X}) \to H^p(\partial X^{bs})). 
\end{equation*}

We can now state our main result. 

\begin{theorem}\label{thm: main}
Let $0\leq p < n$ be fixed. 
Then there exists a first-quadrant homological spectral sequence 
\begin{equation}\label{eqn: cork ss}
E^{1}_{i,j}= \bigoplus_{F\in \mathcal{C}(i)} H^{n(i)-i-j}({\GF}, H^0(\Omega_{{\YFcpt}}^{p})^{\ast}) 
\; \; \Rightarrow \; \;  
E^{\infty}_{m} \simeq {\Gr}^{W}_{2n-p}F^{n}H^{2n-p-m}(X), 
\end{equation}
where the last isomorphism is valid for $m\geq 2$. 
For $m=1$, we have the exact sequence  
\begin{equation*}
0\to {\Gr}^{W}_{2n-p}F^{n}H^{2n-p-1}(X) \to E^{\infty}_{1} \to H^0(\Omega^{p}_{X})_{{\rm Eis}}^{\ast} \to 0. 
\end{equation*}
The support of the $E^1$ page is contained in the range 
\begin{equation}\label{eqn: cork ss E1 range}
1\leq i \leq d(p), \qquad 0\leq j \leq n(i)-i. 
\end{equation}
\end{theorem}

We call \eqref{eqn: cork ss} the \textit{corank spectral sequence} of level $p$. 
By the bound $1\leq i \leq d(p)$, this degenerates at the $E^{d(p)}$ page. 
The $E^{\infty}$-terms form the (truncated) vertical line of level $p$ in Figure \ref{figure: weight ss D}. 
See Figure \ref{figure: E1 range} for a shape of the range \eqref{eqn: cork ss E1 range}. 
Even inside \eqref{eqn: cork ss E1 range}, some terms may vanish. 
For example, the degree $1$ cohomology 
\begin{equation}\label{eqn: H1 vanish}
E^{1}_{i, n(i)-i-1} = \bigoplus _{F} H^{1}({\GF}, H^0(\Omega_{{\YFcpt}}^{p})^{\ast}) 
\end{equation}
often vanish by a theorem of Margulis (\cite{Mar} Theorem 3). 

\begin{figure}[h]
\includegraphics[height=45mm, width=60mm]{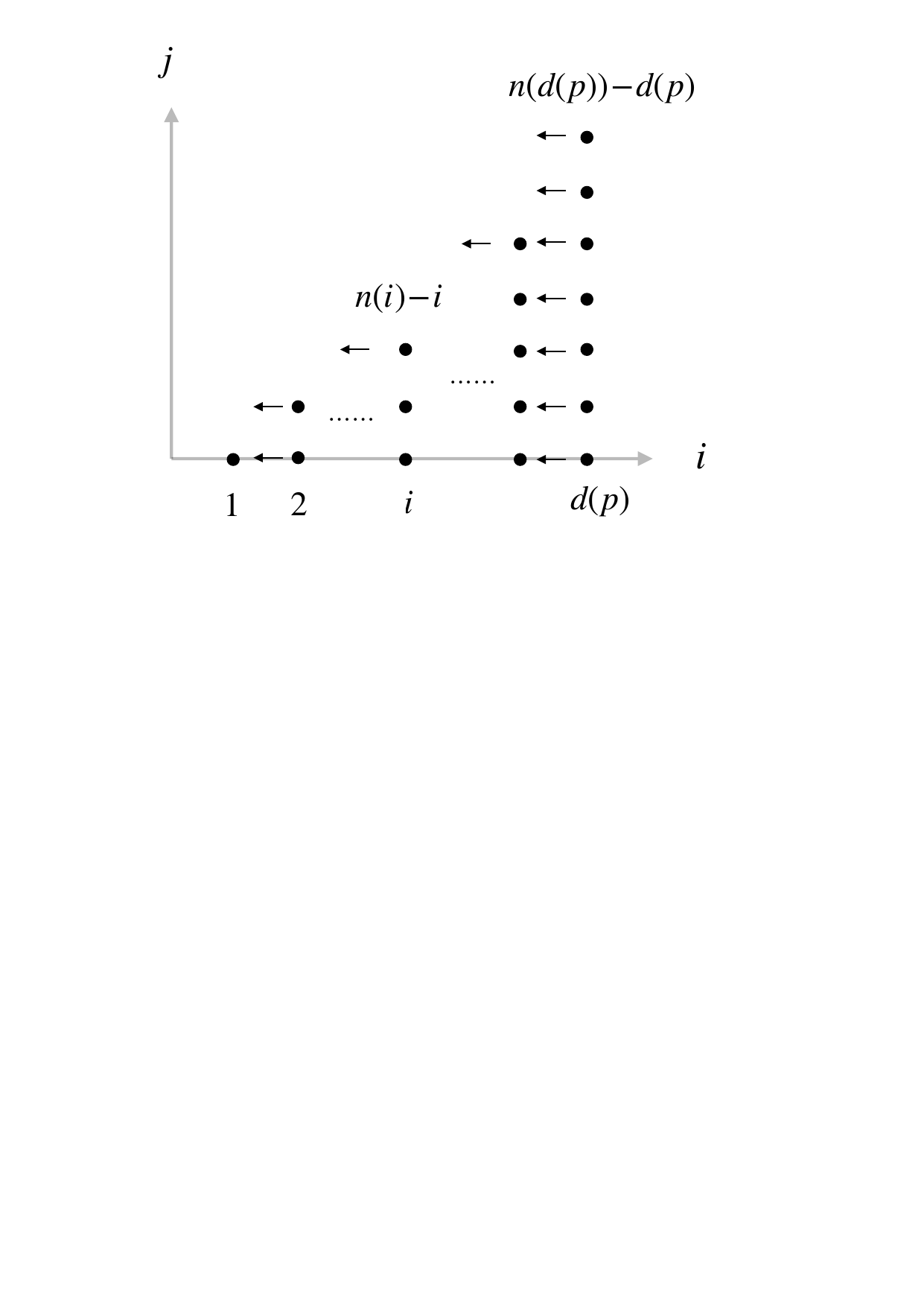}
\caption{$E^1$ page\label{figure: E1 range}}
\end{figure}

It is immediate to derive the bound \eqref{eqn: cork ss E1 range} once  
the spectral sequence \eqref{eqn: cork ss} was constructed.  
We first check $i\leq d(p)$. 
If $i> d(p)$, then $n(i)>n-p$. 
For $F\in \mathcal{C}(i)$, the Kuga variety $Y_F$ has dimension $n-n(i)<p$. 
Therefore $H^0(\Omega^{p}_{{\YFcpt}})=0$ for dimension reason. 
This shows that $E^{1}_{i,j}=0$ if $i>d(p)$. 

Next we verify $0\leq j\leq n(i)-i$. 
Let $F\in \mathcal{C}(i)$. 
Taking a flag 
$F=F_{i}\prec \cdots \prec F_{1}$ of cusps of length $i$ starting from $F$, 
we obtain the flag $U(F_{1})\subset \cdots \subset U(F_{i})=U(F)$. 
This shows that ${\GF}$ has ${\Q}$-rank $\geq i-1$.  
Since $\dim \mathcal{C}_{F}=n(i)-1$, 
${\GF}$ has cohomological dimension $\leq n(i)-i$ by the theorem of Borel-Serre \cite{BS}. 
Hence $H^{n(i)-i-j}({\GF}, V)\ne 0$ only in the range $0\leq n(i)-i-j \leq n(i)-i$ for any ${\GF}$-module $V$. 
This shows that $E^{1}_{i,j}\ne 0$ only when $0\leq j \leq n(i)-i$. 

The rest of this section is devoted to construction of the spectral sequence \eqref{eqn: cork ss}. 
In \S \ref{ssec: corank filtration}, we introduce a filtration on the dual CW complex of a toroidal boundary divisor of $X$. 
With this filtration, we obtain a spectral sequence converging to the cosheaf-valued cellular homology 
considered in Lemma \ref{lem: cellular sheaf interpret}. 
In \S \ref{ssec: E1}, we prove that the $E^{1}$-terms of this spectral sequence 
are isomorphic to those in \eqref{eqn: cork ss}, 
modulo the acyclicity of a certain simplicial pair. 
In \S \ref{ssec: acyclic}, we prove this acyclicity. 
This uses a technical construction in simplicial topology, 
whose proof will be given in Appendix \ref{sec: appendix}.

\begin{remark}\label{remark: non Q-simple}
The corank spectral sequence holds even when $\mathbb{G}$ is not ${\Q}$-simple 
if we define $n(1)< \cdots <n(r)$ as the set of possible values of $\dim U(F)$ for all cusps $F$ and 
$\mathcal{C}(i)$ as the set of ${\G}$-equivalence classes of cusps $F$ with $\dim U(F)=n(i)$ (cf.~\cite{Ma2}). 
The only difference with the ${\Q}$-simple case is the lower shape of the $E^1$ page: 
since the groups ${\GF}$ may have ${\Q}$-rank $<i-1$ even when $\dim U(F) = n(i)$, 
we may have $E^{1}_{i,j}\ne 0$ for some $j<0$.  
\end{remark}


\subsection{Corank filtration}\label{ssec: corank filtration}

We take a smooth projective toroidal compactification ${\Xcpt}$ of $X$ with SNC boundary divisor $D={\Xcpt}-X$. 
We also choose a numbering of the irreducible components of $D$. 
As explained in \S \ref{ssec: cellular sheaf}, this enables us to define the dual complex ${\DD}$ of $D$. 

To introduce a filtration on ${\DD}$, 
we first describe ${\DD}$ in terms of the fans $\Sigma=(\Sigma_{F})_{F}$. 
For $F\prec F'$, we identify $\Sigma_{F'}\subset \Sigma_{F}$ via the inclusion $U(F')\subset U(F)$. 
We glue the fans in this way. 
By our SNC condition, two rays of a cone are not ${\G}$-equivalent (\cite{Ma2} Lemma 4.3). 
Taking the quotient by ${\G}$, 
we obtain a complex of simplicial cones, which we denote by $\Sigma/{\G}$ (cf.~\cite{AMRT} p.183). 
The cones in $\Sigma/{\G}$ are ${\G}$-equivalence classes of pairs $(F, \sigma)$ 
where $F$ is a cusp and $\sigma$ is an $F$-cone. 
Replacing the cones in $\Sigma/{\G}$ by their projectivization, we obtain a $\Delta$-complex. 
We denote it by ${\DSG}$. 

\begin{lemma}
We have a natural isomorphism ${\DD}\simeq {\DSG}$. 
\end{lemma}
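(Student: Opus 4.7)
The plan is to construct an explicit cellwise bijection between $\Delta(\Sigma/\G)$ and $\Delta(D)$ and check that it respects the face relations and identifications that define the two $\Delta$-complexes.

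First I would define the cellwise map. For each $\G$-equivalence class $[(F,\sigma)]$ in $\Sigma/\G$ with $\dim \sigma = m$, the stratum closure $\DFscpt \subset \Xcpt$ is a well-defined connected component of $D(m)$ by \eqref{eqn: D(m) toroidal}, so it determines an $(m-1)$-simplex of $\Delta(D)$; its vertices are labelled by the irreducible components of $D$ corresponding to the rays of $\sigma$. Send the projectivization of $\sigma$ in $\DSG$ to this simplex. By \eqref{eqn: D(m) toroidal}, the induced map from $m$-cones in $\Sigma/\G$ (up to scaling) to connected components of $D(m)$ is a bijection for every $m\geq 1$, so the map is a bijection on cells of every dimension.

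Next I would verify compatibility with faces. A face of the projectivized cone $\sigma$ in $\DSG$ corresponds to an $F'$-cone $\sigma'\succ \sigma$ with $F'\preceq F$ (using $U(F)\subseteq U(F')$ to view $\sigma$ inside $C(F')^{\ast}$). By \eqref{eqn: toroidal stratify adjacent}, these are exactly the pairs $(F',\sigma')$ for which $D_{F',\sigma'}\subset \DFscpt$, i.e.\ exactly the higher-codimension strata whose closures sit inside $\DFscpt$. In the dual complex $\Delta(D)$, this is precisely the face relation: a simplex $\tau'$ is a face of $\tau$ iff the corresponding stratum satisfies $D_{\tau'}\supset D_{\tau}$. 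So the map intertwines the face posets. The orientation/ordering datum on each simplex of $\Delta(D)$ comes from the fixed numbering of the components of $D$, and on $\DSG$ from a choice of ordering of the rays of $\sigma$; these can be taken compatible since the rays of $\sigma$ are in natural bijection with the components of $D$ passing through $\DFs$.

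Finally I would verify that the map is well-defined on $\G$-orbits: if $\gamma\in\G$ sends $(F,\sigma)$ to $(\gamma F,\gamma\sigma)$, then $\gamma$ induces an isomorphism $\mathcal{X}(F)^{\Sigma_F}\to \mathcal{X}(\gamma F)^{\Sigma_{\gamma F}}$ in the construction of $\Xcpt$ (\S\ref{ssec: toroidal}), hence identifies $\DFscpt$ with $\overline{D_{\gamma F,\gamma\sigma}}$ inside $\Xcpt$; thus the two pairs produce the same simplex of $\Delta(D)$. Conversely two pairs giving the same stratum of $D$ must be $\G$-equivalent, by the definition of the stratification of $\Xcpt$. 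Putting these steps together gives an isomorphism of $\Delta$-complexes $\DSG\simeq \DD$.

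The main obstacle is not any single deep step but rather the bookkeeping: one must carefully check that the gluing of the fans that defines $\Sigma/\G$ matches, under $(F,\sigma)\mapsto \DFscpt$, the identifications among connected components of the various $D(m)$ forced by incidences inside $\Xcpt$, and that orientations can be chosen coherently. Once the face relation is established via \eqref{eqn: toroidal stratify adjacent}, the remaining verifications are essentially formal consequences of the construction of $\Xcpt$ in \S\ref{ssec: toroidal}.
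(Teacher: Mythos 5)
Your proposal is correct and follows essentially the same route as the paper: the bijection on cells comes from \eqref{eqn: D(m) toroidal} and the compatibility with the face poset from \eqref{eqn: toroidal stratify adjacent}, with the paper leaving the orientation and $\G$-orbit bookkeeping implicit. One small wording slip: the pairs $(F',\sigma')$ with $F'\preceq F$ and $\sigma'\succ\sigma$ are the \emph{cofaces} of $(F,\sigma)$ (the simplices having it as a face), not its faces, but your subsequent matching with $D_{F',\sigma'}\subset\DFscpt$ is consistent and the argument goes through.
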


\begin{proof}
By the description \eqref{eqn: D(m) toroidal} of $D(m)$, 
connected components of $D(m)$ are in one-to-one correspondence with 
$m$-dimensional cones in $\Sigma/{\G}$, 
or in other words, $(m-1)$-simplices in ${\DSG}$. 
This defines a one-to-one correspondence of simplices between ${\DD}$ and ${\DSG}$. 
By \eqref{eqn: toroidal stratify adjacent}, 
this preserves the adjacent relation between the simplices. 
\end{proof}

Now we define a filtration on ${\DSG}$. 
For each $1\leq i \leq r$, let ${\DSG}_{i}$ be the union of interiors of simplices $(F, \sigma)$ with ${\rm cork}(F)\leq i$. 

\begin{lemma}
${\DSG}_{i}$ is a sub $\Delta$-complex of ${\DSG}$. 
\end{lemma}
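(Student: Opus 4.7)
The plan is to verify directly that $\DSG_i$ is closed under the face relation, which is the defining property of a sub $\Delta$-complex. Since the face relation in $\DSG$ encodes the same combinatorics as the face relation in $\DD$ under the isomorphism just established, I would first translate both sides so that all the bookkeeping happens on the toroidal side, where it is controlled by the adjacency formula \eqref{eqn: toroidal stratify adjacent}.

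First I would spell out the face relation in $\DSG$: a simplex $(F,\sigma)$ is a face of $(F',\sigma')$ precisely when $\overline{D_{F,\sigma}}\supset \overline{D_{F',\sigma'}}$, which by \eqref{eqn: toroidal stratify adjacent} is equivalent to $F' \preceq F$ together with $\sigma \prec \sigma'$ (as cones in the common ambient $U(F')$, using $U(F')\supseteq U(F)$). In particular, the cusp label can only get ``larger'' (in the Satake order) when passing to a face.

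Next I would invoke the monotonicity of corank with respect to the Satake order. By the reference flag $F_1 \succ \cdots \succ F_r$ with $\mathrm{cork}(F_i)=i$, together with $\mathbb{G}(\Q)\cap G$-equivariance, if $F' \preceq F$ then $F'$ is at least as deep in the Satake stratification as $F$, and hence $\mathrm{cork}(F)\leq \mathrm{cork}(F')$. Combining this with the previous paragraph: whenever $(F,\sigma)$ is a face of $(F',\sigma')$, one has $\mathrm{cork}(F)\leq \mathrm{cork}(F')$.

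The conclusion is then immediate: if $(F',\sigma')\in \DSG_i$, i.e.\ $\mathrm{cork}(F')\leq i$, then every face $(F,\sigma)$ satisfies $\mathrm{cork}(F)\leq \mathrm{cork}(F')\leq i$, hence lies in $\DSG_i$. So $\DSG_i$ is closed under faces and is a sub $\Delta$-complex of $\DSG$. There is no genuine obstacle here beyond careful tracking of the conventions (direction of ``$\prec$,'' cone inclusion versus stratum inclusion, and the fact that bigger cusps have smaller corank); the lemma is essentially a translation of the monotonicity of corank under Satake adjacency into the language of the dual complex.
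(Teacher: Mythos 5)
Your proof is correct and is essentially the paper's own argument: both reduce the lemma to the monotonicity of corank under the Satake closure order (a face $(F',\sigma')\prec(F,\sigma)$ forces $F'\succeq F$, hence $\operatorname{cork}(F')\leq\operatorname{cork}(F)$), the only difference being that you route the face relation through the stratum adjacency \eqref{eqn: toroidal stratify adjacent} while the paper states it directly on the cone complex. No gap.
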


\begin{proof}
If $\sigma'\prec \sigma$ for two cones $(F, \sigma)$, $(F', \sigma')$, 
then $F'\succeq F$ and hence ${\rm cork}(F')\leq {\rm cork}(F)$. 
This shows that if $\sigma$ belongs to ${\DSG}_{i}$, its faces also belong to ${\DSG}_{i}$. 
\end{proof}

In this way we obtain the increasing filtration 
\begin{equation}\label{eqn: corank filtration}
{\DSG}_{1} \subset {\DSG}_{2} \subset \cdots \subset {\DSG}_{r}={\DSG} 
\end{equation}
on ${\DSG}$. 
We call \eqref{eqn: corank filtration} the \textit{corank filtration}. 
In what follows, we abbreviate 
$\Delta={\DSG}={\DD}$ and $\Delta_{i}={\DSG}_{i}$. 

Let ${\Fp}$ be the cellular cosheaf on $\Delta$ introduced in \S \ref{ssec: cellular sheaf} 
which associates to a simplex $(F, \sigma)\in \Delta$ the linear space $H^{0}(\Omega_{{\DFScpt}}^{p})^{\ast}$. 
Recall from Lemma \ref{lem: cellular sheaf interpret} that 
${\Gr}^{W}_{2n-p}F^{n}H^{2n-\ast}(X)$ can be interpreted as the cellular homology of ${\Fp}$. 
Let $C_{\bullet}(\Delta, {\Fp})$ be the chain complex of ${\Fp}$. 
The corank filtration \eqref{eqn: corank filtration} defines the increasing filtration 
\begin{equation*}
C_{\bullet}(\Delta_{1}, {\Fp}) \subset C_{\bullet}(\Delta_{2}, {\Fp}) \subset \cdots 
\subset C_{\bullet}(\Delta_{r}, {\Fp}) = C_{\bullet}(\Delta, {\Fp})  
\end{equation*}
on $C_{\bullet}(\Delta, {\Fp})$. 
Its graded quotients are the relative chain complexes 
$C_{\bullet}(\Delta_{i}, \Delta_{i-1}; {\Fp})$ as defined in \S \ref{ssec: cellular sheaf}. 
By \cite{Mac} Theorem XI.3.1, this defines a homological spectral sequence 
\begin{equation}\label{eqn: filtered CW ss}
E^{1}_{i,j} = H_{i+j}(\Delta_{i}, \Delta_{i-1}; {\Fp}) 
\; \; \Rightarrow \; \;  
E^{\infty}_{i+j} = H_{i+j}(\Delta, {\Fp}).  
\end{equation}
This will be the corank spectral sequence. 
Thus the differentials $d^1$ in the corank spectral sequence are 
the connecting maps of relative homology 
for the triplets $\Delta_{i-1}\subset \Delta_{i} \subset \Delta_{i+1}$.

\subsection{The $E^1$ page}\label{ssec: E1}

The main task in the proof of Theorem \ref{thm: main} 
is computation of the relative homology in \eqref{eqn: filtered CW ss}. 
The result is as follows. 

\begin{proposition}\label{prop: E1 page}
We have a natural isomorphism 
\begin{equation*}
H_{m}(\Delta_{i}, \Delta_{i-1}; {\Fp}) \simeq 
\bigoplus_{F\in \mathcal{C}(i)} H^{n(i)-m-1}({\GF}, H^0(\Omega_{{\YFcpt}}^{p})^{\ast}). 
\end{equation*}
\end{proposition}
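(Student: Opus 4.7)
The plan is to reduce the relative chain complex in three steps: decompose it by cusp orbit, separate out the toric-fiber contribution using the stable birational invariance of $H^0(\Omega^p)$, and then identify the remaining constant-coefficient complex with group cohomology of $\GF$ via Lefschetz duality.

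Concretely, by \eqref{eqn: relative chain complex}, \eqref{eqn: D(m) toroidal} and the definition of the corank filtration,
\begin{equation*}
C_{m-1}(\Delta_i, \Delta_{i-1}; \Fp) = \bigoplus_{F\in \mathcal{C}(i)}\; \bigoplus_{[\sigma]\in \GF\backslash \Sigma_F^\circ(m)} H^0(\Omega^p_{\DFscpt})^{\ast},
\end{equation*}
where $\Sigma_F^\circ(m)$ denotes the set of $m$-dimensional $F$-cones. By \eqref{eqn: toroidal stratify adjacent}, the only faces of an $F$-cone that leave its $F$-summand correspond to cusps $F'\succ F$ of corank strictly less than $i$, and those have already been killed modulo $\Delta_{i-1}$; hence the differential preserves each $F$-summand. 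For the coefficient, $D_{F,\sigma}$ is a principal torus bundle over the Kuga variety $Y_F$ (\S\ref{ssec: toroidal}), so $\DFscpt$ is stably birational to a product $\YFcpt\times V$ for some smooth projective toric variety $V$. Since $h^{p,0}(V)=\delta_{p,0}$ for toric varieties, K\"unneth combined with the birational invariance of $H^0(\Omega^p)$ yields a canonical isomorphism
\begin{equation*}
H^0(\Omega^p_{\DFscpt}) \simeq H^0(\Omega^p_{\YFcpt}),
\end{equation*}
compatible with the restriction maps for faces $\sigma'\prec \sigma$ in the same cusp, since every $(p,0)$-form pulls back from $\YFcpt$ and both restrictions become the identity on the base factor. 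Tensoring this factor out gives
\begin{equation*}
C_{m-1}(\Delta_i, \Delta_{i-1}; \Fp) \simeq \bigoplus_{F\in \mathcal{C}(i)} H^0(\Omega^p_{\YFcpt})^{\ast} \otimes C^F_{m-1},
\end{equation*}
where $C^F_\bullet$ is the constant-coefficient cellular chain complex indexed by $\GF$-orbits of $F$-cones, with simplicial signs inherited from the differential of \eqref{eqn: Fn E1}.

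It then remains to prove $H_{m-1}(C^F_\bullet) \simeq H^{n(i)-m}(\GF)$. The complex $C^F_\bullet$ is the relative cellular chain complex of the pair obtained by projectivizing the fan $\Sigma_F$ modulo $\GF$, taken relative to the sub-complex of cones supported on $\partial C(F)$. Now $\CF$ is a contractible symmetric space of real dimension $n(i)-1$ on which the neat arithmetic group $\GF$ acts freely and properly; the Borel-Serre compactification of $\GF\backslash \CF$ is a compact oriented manifold with corners that is a $K(\GF,1)$. A standard $\GF$-equivariant deformation retract identifies the projectivized fan pair with the Borel-Serre pair, and Lefschetz duality then yields
\begin{equation*}
H_{m-1}(C^F_\bullet) \simeq H^{(n(i)-1)-(m-1)}(\GF) = H^{n(i)-m}(\GF),
\end{equation*}
completing the identification.

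The main obstacle is the final duality step: one must reconcile the combinatorial fan geometry (with its rational boundary cones) with the smooth Borel-Serre geometry, verify orientability of the quotient manifold with corners, and correctly handle any central scalar factor that may sit inside $\GF$. A technically cleaner alternative is to argue on the unprojectivized cone $C(F)^\circ$---contractible, oriented, of real dimension $n(i)$, with free proper $\GF$-action by neatness, so that $\GF\backslash C(F)^\circ$ is itself a $K(\GF,1)$---and to invoke open-manifold Poincar\'e duality to convert the $F$-cone cellular complex into Borel-Moore chains, delivering $H^{n(i)-m}(\GF)$ directly and bypassing the compactification issue entirely.
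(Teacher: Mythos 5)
Your proposal is correct and follows essentially the same route as the paper: decompose the relative chain complex over $F\in\mathcal{C}(i)$, factor out $H^0(\Omega^{p}_{\YFcpt})^{\ast}$ via stable birational invariance of $H^0(\Omega^p)$ along the torus fibration $D_{F,\sigma}\to Y_F$, and convert the remaining constant-coefficient relative homology into $H^{n(i)-m}(\GF)$ by duality. For the last step the paper avoids the Borel--Serre bordification entirely and instead uses exactly your ``cleaner alternative'' in projectivized form: $H_{m-1}(\Delta(F),\partial\Delta(F))$ is the dual of $H^{m-1}_{c}(\CF/\GF)$ (relative cohomology of a CW pair equals compactly supported cohomology of the open complement), and Poincar\'e duality on the $(n(i)-1)$-dimensional $K(\GF,1)$ space $\CF/\GF$ gives $H^{n(i)-m}(\GF)$.
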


Theorem \ref{thm: main} is immediate from Proposition \ref{prop: E1 page} as follows. 
By Proposition \ref{prop: E1 page}, 
the spectral sequence \eqref{eqn: filtered CW ss} takes the form  
\begin{equation*}\label{eqn: filtered CW spectral sequence}
E^{1}_{i,j} = 
\bigoplus_{F\in \mathcal{C}(i)} H^{n(i)-i-j-1}({\GF}, H^0(\Omega_{{\YFcpt}}^{p})^{\ast})
\; \; \Rightarrow \; \;  
E^{\infty}_{m} = H_{m}(\Delta, {\Fp}).  
\end{equation*}
By Lemma \ref{lem: cellular sheaf interpret}, we have 
$E^{\infty}_{m} \simeq {\Gr}^{W}_{2n-p}F^{n}H^{2n-p-m-1}(X)$ 
if $m>0$. 
For $m=0$ we have the exact sequence 
\begin{equation*}
0\to {\Gr}^{W}_{2n-p}F^{n}H^{2n-p-1}(X) \to E_{0}^{\infty} \to 
\im(H^0(\Omega^{p}_{D(1)})^{\ast}\to H^0(\Omega^{p}_{{\Xcpt}})^{\ast}) \to 0 
\end{equation*}
by \eqref{eqn: m=1}.  
The signed Gysin map 
$H^0(\Omega^{p}_{D(1)})^{\ast}\to H^0(\Omega^{p}_{{\Xcpt}})^{\ast}$ 
has the same image as the non-signed Gysin map. 
Taking the conjugate dual, we obtain 
\begin{equation*}
\im(H^0(\Omega^{p}_{D(1)})^{\ast}\to H^0(\Omega^{p}_{{\Xcpt}})^{\ast}) 
\; \simeq \; H^0(\Omega^{p}_{X})_{{\rm Eis}}^{\ast}. 
\end{equation*}
Finally, shifting the index $j$ by $1$, we obtain the spectral sequence \eqref{eqn: cork ss}. 

The rest of \S \ref{ssec: E1} is devoted to the proof of Proposition \ref{prop: E1 page}. 
Let us prepare some notations. 
For a cusp $F$, we denote by $\Delta_F$ the infinite simplicial complex 
obtained as the projectivization of the cone complex $\Sigma_{F}$. 
The numbering of the irreducible components of $D$ induces that of vertices of every simplex of $\Delta_{F}$. 
We will use the symbol $\sigma$ for both simplices in $\Delta_{F}$ and cones in $\Sigma_{F}$. 
This will not cause confusion except for dimension;  
whenever we write $\dim \sigma$ in what follows, 
we shall always mean the dimension as a simplex. 
We write $\Delta_{F}^{\circ}\subset \Delta_{F}$ for the subset of simplices corresponding to $F$-cones. 
We call such simplices \textit{$F$-simplices}. 
We denote by $\partial \Delta_F\subset \Delta_{F}$ the complement of $\Delta_{F}^{\circ}$. 
This is the projectivization of the boundary cone complex $\bigcup_{F'\succ F}\Sigma_{F'}$.  
The simplicial complex $\Delta_{F}$ is locally finite outside $\partial \Delta_{F}$, but not so at $\partial \Delta_{F}$. 

The group ${\GF}$ acts on $\Delta_{F}$. 
Since two vertices of a simplex in $\Delta_{F}$ are not ${\G}$-equivalent, 
they are in particular not ${\GF}$-equivalent. 
This enables us to form the quotient 
$\Delta_{F}'=\Delta_{F}/{\GF}$ as a finite $\Delta$-complex. 
We denote by $\partial \Delta_{F}'=\partial \Delta_{F}/{\GF}$ its boundary. 
We use the symbol $[\sigma]$ for expressing simplices in $\Delta_{F}'$, where $\sigma\in \Delta_{F}$. 
The numbering of vertices of $[\sigma]$ is induced from that of $\sigma$. 
This is well-defined because the ${\GF}$-action on $\Delta_{F}$ preserves the numbering of vertices of simplices by construction. 

Now we proceed to the proof of Proposition \ref{prop: E1 page}. 

\begin{proof}[(Proof of Proposition \ref{prop: E1 page})]
We divide the proof into three steps. 
The first step separates the cusps, 
the second step resolves ${\GF}$-monodromy, 
and the last step is appearance of group cohomology.  

\begin{step}\label{step1}
We have 
\begin{equation*}\label{eqn: split relative cohomology}
H_{m}(\Delta_{i}, \Delta_{i-1}; {\Fp}) = 
\bigoplus_{F\in \mathcal{C}(i)}  H_{m}(\Delta_{F}', \partial\Delta_{F}'; {\Fp}). 
\end{equation*}
\end{step}

\begin{proof}
By \eqref{eqn: relative chain complex}, 
the simplices $(F, [\sigma])$ that contribute to the chain complex $C_{\bullet}(\Delta_{i}, \Delta_{i-1}; {\Fp})$ 
are those with ${\rm cork}(F)=i$. 
For each such cusp $F$, we have a natural simplicial map $\Delta_{F}'\to \Delta_{i}$. 
This maps $\partial\Delta_{F}'$ to a subcomplex of $\Delta_{i-1}$, 
and sends simplices in $\Delta_{F}' \backslash \partial\Delta_{F}'$ bijectively to $F$-simplices in $\Delta_{i}\backslash \Delta_{i-1}$ 
because ${\G}$-equivalence reduces to ${\GF}$-equivalence for $F$-simplices. 
If $F\ne F'\in \mathcal{C}(i)$, no $F'$-simplex appears in the boundary of a $F$-simplex and vice versa. 
This shows that the map 
$\bigsqcup_{F\in \mathcal{C}(i)}(\Delta_{F}', \partial\Delta_{F}') \to (\Delta_{i}, \Delta_{i-1})$ 
induces a splitting    
\begin{equation*}
C_{\bullet}(\Delta_{i}, \Delta_{i-1}; {\Fp}) = 
\bigoplus_{F\in \mathcal{C}(i)}  C_{\bullet}(\Delta_{F}', \partial\Delta_{F}'; {\Fp}) 
\end{equation*}
of chain complex. 
\end{proof}

The problem is thus reduced to calculating 
$H_{\ast}(\Delta_{F}', \partial\Delta_{F}'; {\Fp})$ 
for each cusp $F$. 

\begin{step}\label{step2}
$H_{m}(\Delta_{F}', \partial\Delta_{F}'; {\Fp})$ 
is isomorphic to the homology of the complex 
\begin{equation}\label{eqn: complex step2}
\hom (C^{\bullet}_{c}(\Delta_{F}, \partial\Delta_{F}), \: H^0(\Omega_{{\YFcpt}}^{p})^{\ast})^{{\GF}} 
\end{equation}
in degree $m$, 
where $C^{\bullet}_{c}(\Delta_{F}, \partial\Delta_{F})$ is the compactly supported cochain complex 
of the simplicial pair $(\Delta_{F}, \partial\Delta_{F})$. 
\end{step}

\begin{proof}
By \eqref{eqn: relative chain complex} and the definition of ${\Fp}$, we can write 
\begin{equation}\label{eqn: CmDFFp}
C_{m}(\Delta_{F}', \partial\Delta_{F}'; {\Fp}) = 
\bigoplus_{\substack{ [\sigma]\in \Delta_{F}' \backslash \partial\Delta_{F}' \\ \dim \sigma=m}}  H^{0}(\Omega^{p}_{{\DFScpt}})^{\ast}. 
\end{equation}
The boundary map of $C_{\bullet}(\Delta_{F}', \partial\Delta_{F}'; {\Fp})$ 
is alternating sum of conjugate dual of the restriction maps 
$H^{0}(\Omega^{p}_{\overline{D_{F,[\tau]}}}) \to H^{0}(\Omega^{p}_{{\DFScpt}})$ 
for $\tau \prec \sigma$. 
Here the sign agrees with that determined by the numbering of vertices of $[\sigma]$ 
because $\Delta_{F}'\to \Delta_{i}\subset \Delta$ preserves such numberings by construction. 

We have a monodromy action of ${\GF}$ on the strata ${\DFS}$, 
which prevents us from separating the holomorphic part and the combinatorial part in \eqref{eqn: CmDFFp}. 
In order to resolve this, we go back to the boundary of $\mathcal{Y}(F)^{\Sigma_{F}}$ considered in \S \ref{ssec: boundary}. 
Thus, for each $[\sigma]\in \Delta_{F}' \backslash \partial\Delta_{F}'$, 
we identify ${\DFS}$ with the quotient of $\bigsqcup_{\sigma}D_{F,\sigma}$ by ${\GF}$ 
where $\sigma$ ranges over all simplices in $\Delta_{F}^{\circ}$ representing $[\sigma]$. 
This identifies $H^{0}(\Omega^{p}_{{\DFScpt}})$ with 
the ${\GF}$-invariant part of $\prod_{\sigma} H^{0}(\Omega^{p}_{{\DFscpt}})$,  
where ${\DFscpt}$ is a smooth projective compactification of $D_{F,\sigma}$.  
Then \eqref{eqn: CmDFFp} can be rewritten as 
\begin{equation*}
C_{m}(\Delta_{F}', \partial\Delta_{F}'; {\Fp}) \simeq  
\left( \prod_{\substack{ \sigma \in \Delta_{F}^{\circ} \\ \dim \sigma=m}}  H^{0}(\Omega^{p}_{{\DFscpt}})^{\ast} \right)^{{\GF}}. 
\end{equation*}

Now, as explained in \S \ref{ssec: boundary}, 
each ${\DFs}$ has a \textit{canonical} torus fibration ${\DFs}\to Y_{F}$. 
Recall that $H^0(\Omega^{p})$ is a stable birational invariant of a smooth projective variety, i.e., 
a birational map $X_1 \dashrightarrow X_{2}\times {\proj}^1$ induces an isomorphism 
$H^0(\Omega^{p}_{X_{2}}) \simeq H^0(\Omega^{p}_{X_{1}})$.  
Therefore pullback by ${\DFs}\to Y_{F}$ gives a canonical isomorphism 
$H^{0}(\Omega^{p}_{{\YFcpt}}) \to H^{0}(\Omega^{p}_{{\DFscpt}})$. 
We introduce a formal symbol $\langle \sigma \rangle$ expressing the simplex $\sigma$ and rewrite this isomorphism as 
\begin{equation}\label{eqn: normalize DFs}
H^{0}(\Omega^{p}_{{\DFscpt}}) = H^{0}(\Omega^{p}_{{\YFcpt}}) \otimes {\C}\langle \sigma \rangle. 
\end{equation}
Passing to the conjugate dual, we write 
\begin{equation*}
H^{0}(\Omega^{p}_{{\DFscpt}})^{\ast} = \hom ({\C}\langle \sigma \rangle^{\vee}, H^{0}(\Omega^{p}_{{\YFcpt}})^{\ast}) 
\end{equation*}
similarly. 
Then we have 
\begin{eqnarray*}
\prod_{\sigma}  H^{0}(\Omega^{p}_{{\DFscpt}})^{\ast} 
& = & 
\prod_{\sigma}\hom ({\C}\langle \sigma \rangle^{\vee}, H^{0}(\Omega^{p}_{{\YFcpt}})^{\ast}) \\ 
& \simeq & 
\hom \left( \bigoplus_{\sigma} {\C}\langle \sigma \rangle^{\vee}, \: H^{0}(\Omega^{p}_{{\YFcpt}})^{\ast} \right) \\ 
& = &  
\hom ( C^{m}_{c}(\Delta_{F}, \partial\Delta_{F}), \: H^{0}(\Omega^{p}_{{\YFcpt}})^{\ast} ), 
\end{eqnarray*}
where $\sigma$ ranges over $m$-simplices in $\Delta_{F}^{\circ}$. 
The passage from direct product to direct sum will be crucial later. 
This identification is ${\GF}$-equivariant, where 
${\GF}$ acts on both factors 
$H^{0}(\Omega^{p}_{{\YFcpt}})^{\ast}$ and $C^{m}_{c}(\Delta_{F}, \partial\Delta_{F})$. 
Hence we obtain an isomorphism 
\begin{equation}\label{eqn: chain complex resolv}
C_{\bullet}(\Delta_{F}', \partial\Delta_{F}'; {\Fp}) \simeq  
\hom ( C^{\bullet}_{c}(\Delta_{F}, \partial\Delta_{F}), \: H^{0}(\Omega^{p}_{{\YFcpt}})^{\ast} )^{{\GF}} 
\end{equation}
of linear spaces. 

It remains to verify that \eqref{eqn: chain complex resolv} is an isomorphism of chain complexes. 
If $\tau\in \Delta_{F}^{\circ}$ is a codimension $1$ face of $\sigma\in \Delta_{F}^{\circ}$, 
then ${\DFs}\to Y_{F}$ is a sub torus fibration of the smooth toric fibration 
$\bigsqcup_{\tau'\succeq \tau}D_{F,\tau'}\to Y_{F}$. 
Therefore the restriction map 
$H^{0}(\Omega^{p}_{\overline{D_{F,\tau}}}) \to H^{0}(\Omega^{p}_{{\DFscpt}})$ 
is identified via the normalization \eqref{eqn: normalize DFs} with the map 
\begin{equation*}
{\rm id}\otimes (\langle \tau \rangle \mapsto \langle \sigma \rangle) :  
H^{0}(\Omega^{p}_{{\YFcpt}}) \otimes {\C}\langle \tau \rangle 
\to H^{0}(\Omega^{p}_{{\YFcpt}}) \otimes {\C}\langle \sigma \rangle. 
\end{equation*}
Since the numbering of vertices of simplices of $\Delta_{F}'$ is induced from that for $\Delta_{F}$, 
the signs in the boundary map of $C_{\bullet}(\Delta_{F}', \partial\Delta_{F}'; {\Fp})$ 
agree with those of $C^{\bullet}_{c}(\Delta_{F}, \partial\Delta_{F})$.   
\end{proof}

\begin{step}\label{step3}
The degree $m$ homology of \eqref{eqn: complex step2} is isomorphic to 
the group cohomology 
$H^{n(i)-1-m}({\GF}, H^{0}(\Omega^{p}_{{\YFcpt}})^{\ast})$. 
\end{step}
 
\begin{proof}
The key point is calculation of the cohomology of $C^{\bullet}_{c}(\Delta_{F}, \partial\Delta_{F})$: 
\begin{equation}\label{eqn: acyclic 1}
H^{k}_{c}(\Delta_{F}, \partial\Delta_{F}) \simeq 
\begin{cases}
{\C} & k=n(i)-1, \\ 
0 & k\ne n(i)-1. 
\end{cases}
\end{equation}
We will prove this in the next \S \ref{ssec: acyclic} (Proposition \ref{prop: acyclic}). 
Admitting this, the proof of Step \ref{step3} proceeds as follows. 
Since $C^{m}_{c}(\Delta_{F}, \partial\Delta_{F})$ is the direct \textit{sum} over $m$-simplices in $\Delta_{F}^{\circ}$ and 
${\GF}$ acts on $\Delta_{F}^{\circ}$ feely, 
we see that $C^{m}_{c}(\Delta_{F}, \partial\Delta_{F})$ is a free ${\GF}$-module. 
Then \eqref{eqn: acyclic 1} means that 
the $(n(i)-1)$-shift of $C^{\bullet}_{c}(\Delta_{F}, \partial\Delta_{F})$, 
viewed as a \textit{chain} complex in nonpositive degree, is a free resolution of ${\C}$. 
Therefore, by the definition of group cohomology, 
the degree $m$ homology of \eqref{eqn: complex step2}  
is the degree $n(i)-1-m$ cohomology of ${\GF}$ with coefficients in $H^{0}(\Omega^{p}_{{\YFcpt}})^{\ast}$. 
\end{proof}

The proof of Proposition \ref{prop: E1 page} is now completed 
modulo the proof of \eqref{eqn: acyclic 1}, 
which is postponed to the next \S \ref{ssec: acyclic}. 
\end{proof}

\subsection{Acyclicity}\label{ssec: acyclic}

We prove the following property 
which was used in the proof of Proposition \ref{prop: E1 page}.

\begin{proposition}\label{prop: acyclic}
Let $F$ be a cusp of corank $i$. 
Then we have 
\begin{equation}\label{eqn: acyclic 2}
H^{k}_{c}(\Delta_{F}, \partial\Delta_{F}) \simeq  
\begin{cases}
{\C} & k=n(i)-1, \\ 
0 & k\ne n(i)-1. 
\end{cases}
\end{equation}
\end{proposition}

A naive idea to deduce this is as follows. 
We identify simplicial cohomology with singular cohomology. 
\textit{Suppose} that $\Delta_{F}$ was locally finite also at $\partial\Delta_{F}$. 
Then the topological space underlying $\Delta_{F}$ is locally compact, 
so we have 
\begin{equation*}
H^{k}_{c}(\Delta_{F}, \partial\Delta_{F}) \simeq H^{k}_{c}(\Delta_{F}-\partial\Delta_{F}) 
\end{equation*}
by \cite{Bredon} Proposition II.12.3, 
where $\Delta_{F}-\partial\Delta_{F}$ means the complement of 
the topological space underlying $\partial\Delta_{F}$ in that underlying $\Delta_{F}$. 
Since $\Delta_{F}-\partial\Delta_{F}$ is homeomorphic to $\mathcal{C}_{F}\simeq {\R}^{n(i)-1}$, 
we arrive at the compactly supported cohomology of ${\R}^{n(i)-1}$. 

However, $\Delta_{F}$ is in fact not locally finite at $\partial\Delta_{F}$, 
so this naive argument does not work. 
In order to modify this, we replace $(\Delta_{F}, \partial\Delta_{F})$. 
In Proposition \ref{prop: retraction}, 
we will construct a pair $(\Delta_{F}^{+}, \partial\Delta_{F}^{+})$ of simplicial complexes such that 
(1) $\Delta_{F}^{+}$ is locally finite, 
(2) $\Delta_{F}^{+}-\partial\Delta_{F}^{+}$ is homeomorphic to $\Delta_{F}-\partial\Delta_{F}$, and 
(3) $H^{k}_{c}(\Delta_{F}^{+}, \partial\Delta_{F}^{+})$ is isomorphic to $H^{k}_{c}(\Delta_{F}, \partial\Delta_{F})$ for every $k$. 
Roughly speaking, $(\Delta_{F}^{+}, \partial\Delta_{F}^{+})$ is a retraction of $(\Delta_{F}, \partial\Delta_{F})$ toward interior. 
The actual construction is technical and makes use of the barycentric subdivision of $\Delta_{F}$. 
Since this purely belongs to simplicial topology, we decided to treat it separately in Appendix \ref{sec: appendix}. 

Admitting Proposition \ref{prop: retraction}, it is now easy to prove Proposition \ref{prop: acyclic}. 

\begin{proof}[(Proof of Proposition \ref{prop: acyclic})]
Since $H^{k}_{c}(\Delta_{F}, \partial\Delta_{F})\simeq H^{k}_{c}(\Delta_{F}^{+}, \partial\Delta_{F}^{+})$, 
it suffices to calculate $H^{k}_{c}(\Delta_{F}^{+}, \partial\Delta_{F}^{+})$. 
Since $\Delta_{F}^{+}$ is locally finite, we are now able to use \cite{Bredon} Proposition II.12.3 to deduce 
\begin{equation*}
H^{k}_{c}(\Delta_{F}^{+}, \partial\Delta_{F}^{+}) \simeq H^{k}_{c}(\Delta_{F}^{+}-\partial\Delta_{F}^{+}). 
\end{equation*}
Furthermore, we have the homeomorphisms 
\begin{equation*}
\Delta_{F}^{+}-\partial\Delta_{F}^{+} \simeq \Delta_{F}-\partial\Delta_{F} 
\simeq \mathcal{C}_{F} \simeq {\R}^{n(i)-1}. 
\end{equation*}
Therefore we obtain 
\begin{equation*}
H^{k}_{c}(\Delta_{F}^{+}, \partial\Delta_{F}^{+}) \simeq  H^{k}_{c}({\R}^{n(i)-1}) \simeq 
\begin{cases}
{\C} & k=n(i)-1, \\ 
0 & k\ne n(i)-1. 
\end{cases}
\end{equation*}
This proves Proposition \ref{prop: acyclic}. 
\end{proof}

The proof of Theorem \ref{thm: main} is thus completed modulo Proposition \ref{prop: retraction}, 
which will be proved in Appendix \ref{sec: appendix}.

\section{Complements}\label{sec: complement}

In this section we derive some consequences and extensions of the corank spectral sequences. 
In \S \ref{ssec: degeneration}, we look at some range of early degeneration. 
In \S \ref{ssec: small p}, we look at the case $p\leq 1$. 
In \S \ref{ssec: Euler number}, we derive an Euler number identity. 
In \S \ref{ssec: non-neat}, we give an extension to the non-neat case. 
These four subsections can be read independently.

\subsection{Degeneration}\label{ssec: degeneration}

We fix $0\leq p \leq n-n(1)$. 
Let $(E^{r}_{p,q}, d^{r})$ be the corank spectral sequence \eqref{eqn: cork ss} of level $p$.  
As we can see from Figure \ref{figure: E1 range}, when $m$ is close to $n(d(p))$, 
there are only few columns in the $E^1$ page that contribute to $E^{\infty}_{m}$, 
so we have early degeneration. 
We observe this more explicitly in \S \ref{sssec: 5.1.1} and \S \ref{sssec: 5.1.2}. 
As $m$ decreases, more columns contribute, so we need more pages to arrive at $E^{\infty}$. 
However, when $n(1)=1$, we again have early degeneration for small $m$. 
We observe this in \S \ref{sssec: 5.1.4}. 
In what follows, we abbreviate $d=d(p)$.

\subsubsection{$E^1$ degeneration}\label{sssec: 5.1.1}

We have $E^1$ degeneration in the following range. 

\begin{proposition}\label{prop: E1 degeneration}
When $n(d-1)+2 \leq m \leq n(d)$, we have 
\begin{equation*}
F^n{\Gr}^{W}_{2n-p}H^{2n-p-m}(X) \simeq 
\bigoplus_{F\in \mathcal{C}(d)} H^{n(d)-m}({\GF}, H^0(\Omega_{{\YFcpt}}^{p})^{\ast}). 
\end{equation*}
\end{proposition}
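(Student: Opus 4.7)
The plan is to show that, in the specified range of $m$, the $E^1$ page of the corank spectral sequence has exactly one nonzero term of total degree $m$, and that all differentials entering or leaving that term vanish automatically from the support constraints of Theorem \ref{thm: main}.

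First I would locate the unique surviving cell in total degree $m$. The range \eqref{eqn: cork ss E1 range} tells us that $E^1_{i,m-i}$ can be nonzero only if $1 \leq i \leq d$ and $m \leq n(i)$. For $i \leq d-1$ we have $n(i) \leq n(d-1) \leq m-2 < m$ by the hypothesis $m \geq n(d-1)+2$, so $E^1_{i,m-i}=0$. Combined with the vanishing at $i > d$, this leaves only $E^1_{d,m-d}$, whose description is read off directly from Theorem \ref{thm: main} as $\bigoplus_{F\in \mathcal{C}(d)} H^0(\Omega^p_{\YFcpt})^{\ast}\otimes H^{n(d)-m}(\GF)$.

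Next I would rule out all possibly nontrivial differentials touching $E^r_{d,m-d}$. The incoming differential $d^r\colon E^r_{d+r,m-d-r+1}\to E^r_{d,m-d}$ has source in column $d+r>d$, hence zero by \eqref{eqn: cork ss E1 range}. The outgoing differential $d^r\colon E^r_{d,m-d}\to E^r_{d-r,m-d+r-1}$ lands in a term of total degree $m-1$; running the same support argument with $m-1$ in place of $m$ (for $i = d-r < d$ one needs $m-1 \leq n(i) \leq n(d-1)$, contradicting $m-1 \geq n(d-1)+1$) shows that the target vanishes on $E^1$ and hence on every $E^r$. Therefore no nontrivial differential can enter or leave $E^r_{d,m-d}$, so $E^1_{d,m-d}=E^{\infty}_{d,m-d}=E^{\infty}_m$.

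Finally I would invoke Theorem \ref{thm: main} to identify $E^{\infty}_m$ with $F^n\Gr^W_{2n-p}H^{2n-p-m}(X)$, which is legal because $m \geq n(d-1)+2 \geq 2$ (taking $n(0)=0$ when $d=1$). Substituting the explicit $E^1$-term from the previous step yields the stated isomorphism. There is essentially no hard step: the whole proposition is bookkeeping on the support of the $E^1$ page, and the main thing to double-check is that the convention $n(0)=0$ (or an explicit treatment of the $d=1$ case) makes the bound $m \geq n(d-1)+2$ consistent with the range of validity $m \geq 2$ in Theorem \ref{thm: main}.
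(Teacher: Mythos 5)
Your argument is correct and is exactly the paper's proof: the paper likewise observes that the support constraint \eqref{eqn: cork ss E1 range} forces $E^{1}_{d,m-d}$ to be the only nonzero term in total degree $m$ and kills every differential into or out of it, so $E^{\infty}_{m}=E^{1}_{d,m-d}$. Your extra care about the $d=1$ case (the convention $n(0)=0$, ensuring $m\geq 2$ so that the identification of $E^{\infty}_m$ in Theorem \ref{thm: main} applies) is a sensible bit of bookkeeping that the paper leaves implicit.
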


\begin{proof}
By the range \eqref{eqn: cork ss E1 range} of the $E^1$ page, 
when $n(d-1)+2\leq m \leq n(d)$, there is no nonzero differential $d^r$ that hits to nor starts from $E^{r}_{d,m-d}$, 
and there is no other $E^{1}_{i,j}\ne 0$ contributing to $E^{\infty}_{m}$. 
Therefore $E^{\infty}_{m} = E^{1}_{d,m-d}$. 
\end{proof}

See Figure \ref{figure: E1 degeneration} for a shape of the range of this $E^1$ degeneration 
on the $(p, m)$-plane plotting $F^n{\Gr}^{W}_{2n-p}H^{2n-p-m}(X)$. 

\begin{figure}[h]
\includegraphics[height=55mm, width=70mm]{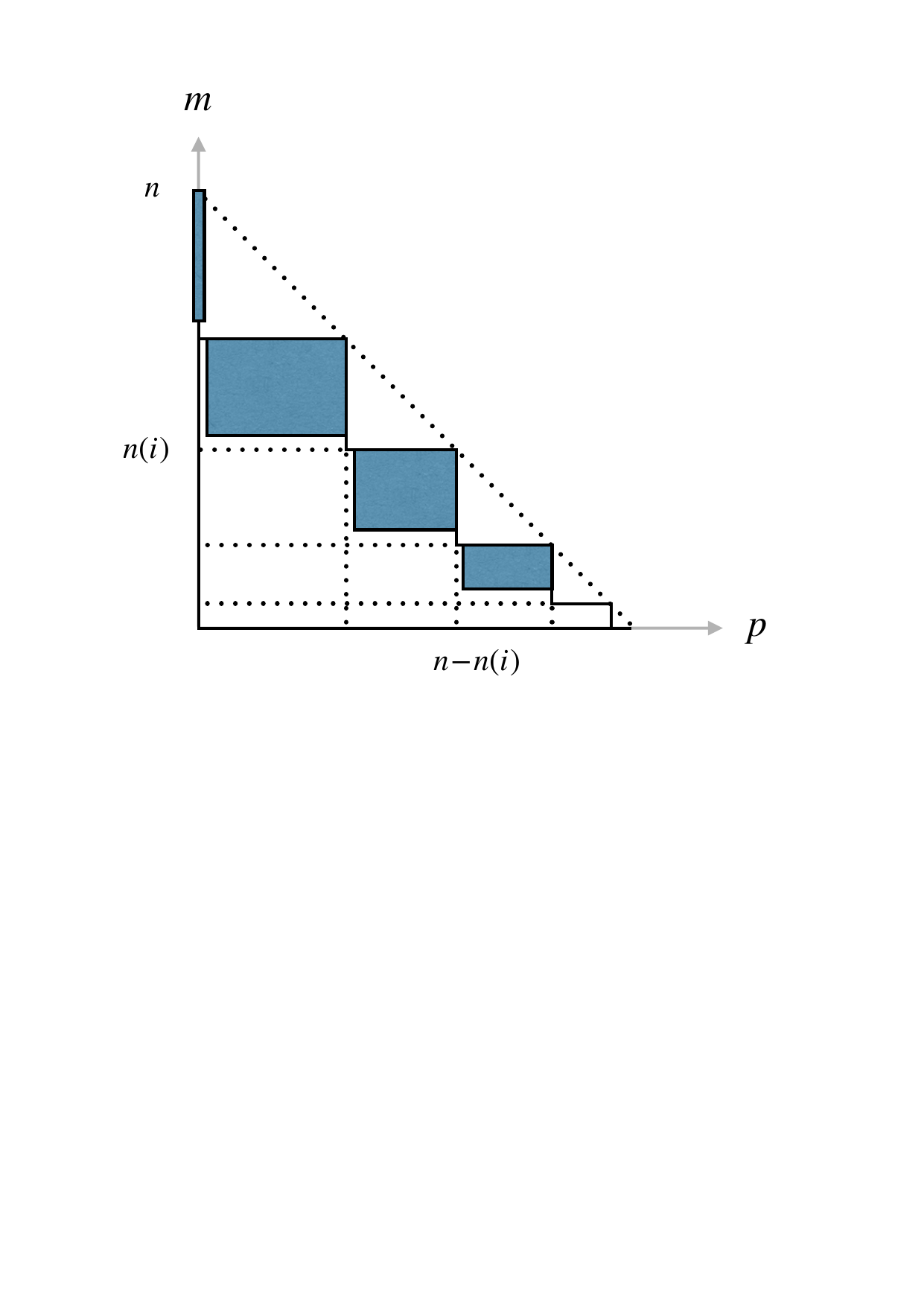}
\caption{Range of $E^1$ degeneration\label{figure: E1 degeneration}}
\end{figure}

\subsubsection{$E^{3/2}$ degeneration}\label{sssec: 5.1.2}

In the next range $n(d-2)+2 \leq m\leq n(d-1)+1$, 
the next column $i=d-1$ comes to contribute, 
so the spectral sequence degenerates only at $E^2$. 
Nevertheless, the situation around the starting point $m=n(d-1)+1$ is still close to $E^1$ degeneration. 
Indeed, there is no nonzero $d^{r}$ that hits to $E^{r}_{d, n(d-1)-d+1}$, and 
$d^{1}$ is the only nonzero differential that starts from $E^{r}_{d, n(d-1)-d+1}$. 
It follows that  
\begin{equation}\label{eqn: E3/2 degeneration}
E^{\infty}_{n(d-1)+1} = \ker (  E^{1}_{d,n(d-1)-d+1} \stackrel{d^1}{\to} E^{1}_{d-1,n(d-1)-d+1}). 
\end{equation}
Explicitly, this is written as 
\begin{eqnarray}\label{eqn: E3/2}
& & F^n{\Gr}^{W}_{2n-p}H^{2n-p-n(d-1)-1}(X) \simeq \\ 
& &  \ker \left(
\bigoplus_{F\in \mathcal{C}(d)} H^{n(d)-n(d-1)-1}({\GF}, H^0(\Omega_{{\YFcpt}}^{p})^{\ast}) \to 
\bigoplus_{F\in \mathcal{C}(d-1)} (H^0(\Omega_{{\YFcpt}}^{p})^{\ast})^{{\GF}} \right). \nonumber 
\end{eqnarray}
Furthermore, when $E^{1}_{d-1, n(d-1)-d}$ vanishes (cf.~\eqref{eqn: H1 vanish}), 
we can extend \eqref{eqn: E3/2 degeneration} to the exact sequence 
\begin{equation*}
0 \to E^{\infty}_{n(d-1)+1} \to E^{1}_{d,n(d-1)-d+1} \to E^{1}_{d-1,n(d-1)-d+1} \to 
E^{\infty}_{n(d-1)} \to E^{1}_{d, n(d-1)-d} \to 0 
\end{equation*}
if $n(d-2)+2 \leq n(d-1)$. 


\subsubsection{The case $m=2$}\label{sssec: 5.1.4}

In some typical examples we have $n(1)=1$. 
In this case, as can be seen from Figure \ref{figure: E1 range}, we have early degeneration also around $m=2$. 
Indeed, there is no nonzero differential $d^{r}$ with $r\geq 2$ that hits to nor starts from $E^{r}_{2,0}$, 
and we have no other $E^{1}_{i,j}\ne 0$ that contributes to $E^{\infty}_{2}$. 
Consequently, we have 
\begin{equation*}
E^{2}_{2,0} \simeq E^{\infty}_{2} \simeq F^{n}{\Gr}^{W}_{2n-p}H^{2n-p-2}(X). 
\end{equation*}
This $E^{2}_{2,0}$ is the middle homology of 
$E^{1}_{1,0} \leftarrow E^{1}_{2,0} \leftarrow E^{1}_{3,0}$, 
which is explicitly written as 
\begin{equation*}
\bigoplus_{F\in \mathcal{C}(1)} (H^0(\Omega_{{\YFcpt}}^{p})^{\ast})^{{\GF}} \leftarrow 
\bigoplus_{F\in \mathcal{C}(2)} H^{n(2)-2}({\GF}, H^0(\Omega_{{\YFcpt}}^{p})^{\ast}) \leftarrow 
\bigoplus_{F\in \mathcal{C}(3)} H^{n(3)-3}({\GF}, H^0(\Omega_{{\YFcpt}}^{p})^{\ast}). 
\end{equation*}

\subsection{The case $p\leq 1$}\label{ssec: small p}

In this subsection we look at the corank spectral sequences of level $p=0, 1$. 
Recall that what played a key role in Step \ref{step2} of the proof of Proposition \ref{prop: E1 page} 
was the stable birational invariance of $H^{0}(\Omega^{p})$. 
On the other hand, when $p\leq 1$, the singular cohomology $H^p(-, {\Q})$ is also a stable birational invariant. 
This enables us to derive a refinement of the corank spectral sequences of level $p\leq 1$. 

We begin with the case $p=0$. 

\begin{proposition}\label{prop: p=0}
The corank spectral sequence of level $p=0$ takes the form 
\begin{equation*}
E^{1}_{i,k-i} = \bigoplus_{F\in \mathcal{C}(i)} H^{n(i)-k}({\GF}) 
\quad \Rightarrow \quad  
E^{\infty}_{k}\simeq {\Gr}^{W}_{2n}H^{2n-k}(X), 
\end{equation*}
where the last isomorphism is valid when $k\geq 2$. 
This is defined for cohomology with ${\Q}$-coefficients. 
\end{proposition}

\begin{proof}
We have $H^0(\Omega_{{\YFcpt}}^{0})={\C}$ and 
$F^{n}{\Gr}^{W}_{2n}H^{2n-k}(X)={\Gr}^{W}_{2n}H^{2n-k}(X)$. 
So the spectral sequence \eqref{eqn: cork ss} for $p=0$ takes the above form. 
To prove that it is defined over ${\Q}$, 
we can run the proof of Theorem \ref{thm: main} with the cosheaf 
$\mathcal{H}_{0}$ (Remark \ref{remark: cosheaf Hp}) instead of $\mathcal{F}_{0}$. 
The stable birational invariance makes it possible to apply the argument of Step \ref{step2} to $\mathcal{H}_{0}$. 
Other parts remain valid. 
This produces the desired spectral sequence over ${\Q}$.  
\end{proof}

Similarly, in the case $p=1$, we have the following. 

\begin{proposition}\label{prop: p=1 Hodge} 
We have a spectral sequence 
\begin{equation}\label{eqn: H1 corank ss}
E^{1}_{i,m-i}= \bigoplus_{F\in \mathcal{C}(i)}  H^{n(i)-m}({\GF}, H^1({\YFcpt})^{\vee}(-n)) \\ 
\quad \Rightarrow \quad 
E^{\infty}_{m} \simeq {\Gr}^{W}_{2n-1}H^{2n-m-1}(X) 
\end{equation}
of pure Hodge structures of weight $2n-1$, 
where the last isomorphism is valid for $m\geq 2$. 
\end{proposition}

\begin{proof}
This can be obtained by reworking the proof of Theorem \ref{thm: main} with the cosheaf 
$\mathcal{H}_{1}$ instead of $\mathcal{F}_{1}$. 
\end{proof}

This refines the corank spectral sequence of level $p=1$  
which can be obtained by taking the $F^{n}$-part of \eqref{eqn: H1 corank ss}. 

When ${\D}$ has a tube domain model, i.e., $n(r)=n$, 
we can sometimes simplify \eqref{eqn: H1 corank ss} as follows. 
Let $F$ be a cusp of ${\D}$. 
The group $N(F)/W(F)$ acts on the real vector space $V(F)=W(F)/U(F)$ by conjugation. 
In particular, the group ${\GFhd}\simeq {\G}(F)'/W(F)_{{\Z}}$ acts on $V(F)$. 
We consider the following condition: 
\begin{equation}\label{eqn: condition}
V(F)^{\vee} \; \textrm{has no nonzero} \; {\GFhd}\textrm{-invariant part}. 
\end{equation}
When the identity component $(N(F)'/W(F))^{\circ}$ of $N(F)'/W(F)$ has no compact factor, 
it is semisimple and isogenous to $\aut(F)$ (see \cite{AMRT} p.174). 
Then, by the Borel density theorem, \eqref{eqn: condition} holds if 
$V(F)$ has no nonzero $(N(F)'/W(F))^{\circ}$-invariant part (see \cite{Ra} Chapter V).  
This holds, e.g., for Siegel modular varieties. 

\begin{proposition}\label{prop: p=1}
Assume that $n(r)=n$, 
the condition \eqref{eqn: condition} is satisfied for every cusp of corank $<r$, 
and $\aut(F)$ is ${\Q}$-simple for every cusp of corank $<r-1$. 
Then we have an isomorphism
\begin{equation}\label{eqn: Gr2n-1}
{\Gr}^{W}_{2n-1}H^{2n-k}(X) \simeq 
\bigoplus_{F\in \mathcal{C}(r-1)} (H^{n(r-1)-k+1}({\G}_{F,\ell}') \otimes H^1(\overline{X_{F}'})^{\vee})^{G}(-n) 
\end{equation}
of pure Hodge structures of weight $2n-1$. 
Here $\overline{X_{F}'}$ is a smooth projective model of $X_{F}'=F/{\GFhd}$ and  
$G={\GF}/{\G}_{F,\ell}'\simeq {\GFh}/{\GFhd}$. 
\end{proposition}

\begin{proof}
We look at the $E^1$ page of the spectral sequence \eqref{eqn: H1 corank ss}. 
Let $F$ be a cusp of corank $<r$. 
By the Leray spectral sequence for the projection $\pi\colon Y_{F}\to X_{F}'$, we have the exact sequence 
\begin{equation*}
0 \to H^1(X_{F}') \stackrel{\pi^{\ast}}{\to} H^1(Y_{F}) \to H^0(X_{F}', R^1\pi_{\ast}{\Q}). 
\end{equation*}
Since in the $C^{\infty}$-level $\pi$ is a principal fiber bundle with fiber $V(F)/V(F)_{{\Z}}$, 
the local system $R^1\pi_{\ast}{\Q}$ is the one associated to the representation $V(F)_{{\Q}}^{\vee}$ of ${\GFhd}$. 
Then the condition \eqref{eqn: condition} implies 
\begin{equation*}
H^0(X_{F}', R^1\pi_{\ast}{\Q}) = (V(F)_{{\Q}}^{\vee})^{{\GFhd}} = 0. 
\end{equation*}
Hence $H^1(Y_F) \simeq H^1(X_{F}')$. 
Taking the ${\Gr}^{W}_{1}$-part gives $H^1({\YFcpt})\simeq H^1(\overline{X_{F}'})$. 

When $F$ has corank $r$, $Y_{F}$ is a point by the assumption $n(r)=n$ and so $H^1(Y_{F})=0$. 
When $F$ has corank $<r-1$, $\aut(F)$ has ${\Q}$-rank $\geq2$. 
With our ${\Q}$-simplicity assumption,   
we have $H^1(X_{F}')=0$ by a theorem of Margulis \cite{Mar}. 
Hence $H^1(Y_{F})=0$ also in this case. 
These show that the $E^{1}$ page of \eqref{eqn: H1 corank ss} is supported on the line  $i=r-1$. 
Hence $E^{\infty}_{k-1}\simeq E^{1}_{r-1,k-r}$. 
Explicitly, this is written as 
\begin{equation*}
{\Gr}^{W}_{2n-1}H^{2n-k}(X) \simeq 
\bigoplus_{F\in \mathcal{C}(r-1)} H^{n(r-1)-k+1}({\GF}, H^1(\overline{X_{F}'})^{\vee}(-n)).  
\end{equation*}

We write $V=H^1(\overline{X_{F}'})^{\vee}$ and $l=n(r-1)-k+1$. 
Since $G$ is finite, the Hochschild-Serre spectral sequence for 
$0\to {\G}_{F,\ell}'\to {\GF} \to G \to 0$ with coefficients in a ${\Q}$-linear space degenerates at $E_{2}$. 
Hence we have 
\begin{equation}\label{eqn: HS inv}
H^{l}({\GF}, V) \simeq H^0(G, H^{l}({\G}_{F,\ell}', V)) = H^{l}({\G}_{F,\ell}', V)^{G} 
=  (H^{l}({\G}_{F,\ell}')\otimes V)^{G}. 
\end{equation}
Here the last equality holds because ${\G}_{F,\ell}'$ acts on $V$ trivially. 
\end{proof}


\subsection{Holomorphic Euler number}\label{ssec: Euler number}

In this subsection we prove an Euler number identity. 
We first derive a more reduced version than Corollary \ref{cor: Euler number intro}. 
We write 
\begin{equation*}
F^nH^{2n-k}(X)_{{\rm Eis}} := F^nH^{2n-k}(X)/W_{2n-k}F^nH^{2n-k}(X) 
\end{equation*}
for the non-pure part of $F^nH^{2n-k}(X)$. 
The holomorphic Euler number of a smooth projective variety $V$ is denoted by  
$\chi_{{\rm hol}}(V)=\sum_{k}(-1)^{k}h^{k,0}(V)$. 
The Euler number of a group $G$ is denoted by $\chi(G)=\sum_{k}(-1)^{k}h^{k}(G)$. 

\begin{proposition}\label{prop: Euler number}
We have 
\begin{eqnarray}\label{eqn: Euler number}
& & \sum_{k=r}^{n}(-1)^k \dim F^nH^{2n-k}(X)_{{\rm Eis}} 
\, - \, \sum_{k=0}^{n-1}(-1)^k \dim H^{0}(\Omega_{X}^{k})_{{\rm Eis}} \\ 
& = & \sum_{F} (-1)^{\dim U(F)} \chi_{{\rm hol}}({\YFcpt}) \cdot \chi({\GF}), \nonumber
\end{eqnarray}
where $F$ ranges over all cusps of $X^{bb}$.  
\end{proposition}

\begin{proof}
By the general theory of spectral sequences, 
the Euler number  
$\chi(E^r) = \sum_{i,j}(-1)^{i+j}\dim E^{r}_{i,j}$ 
of a spectral sequence $(E^{r}_{i,j})$ does not change with respect to $r$. 
We apply this to the corank spectral sequences. 

First we fix $0\leq p \leq n-1$. 
Let $({}_pE^{r}_{i,j})$ be the spectral sequence \eqref{eqn: cork ss} of level $p$. 
We calculate both sides of $\chi({}_pE^1)=\chi({}_pE^{\infty})$. 
As for ${}_pE^{\infty}$, we have 
\begin{equation}\label{eqn: pEinfinity}
\chi({}_pE^{\infty}) \; = \; 
\sum_{m\geq 1} (-1)^m \dim F^{n}{\Gr}^{W}_{2n-p}H^{2n-p-m}(X) - \dim H^{0}(\Omega^{p}_{X})_{{\rm Eis}} 
\end{equation}
by the description of ${}_pE^{\infty}_{m}$ in Theorem \ref{thm: main}. 
As for ${}_pE^{1}$, we have  
\begin{eqnarray*}
\chi({}_pE^{1}) 
& = & \sum_{i,j} \sum_{F\in \mathcal{C}(i)} (-1)^{i+j} \dim H^{n(i)-i-j}({\GF}, H^0(\Omega^{p}_{{\YFcpt}})^{\ast}) \\ 
& = & \sum_{F} (-1)^{\dim U(F)} \sum_{k} (-1)^{k} \dim H^{k}({\GF}, H^0(\Omega^{p}_{{\YFcpt}})^{\ast}).  
\end{eqnarray*}
In the second line we wrote $k=n(i)-i-j$ and $F$ ranges over all cusps of $X^{bb}$. 
Since ${\GF}$ is torsion-free, we have 
\begin{equation*}
\sum_{k} (-1)^{k} \dim H^{k}({\GF}, H^0(\Omega^{p}_{{\YFcpt}})^{\ast}) = 
\chi({\GF}) \cdot h^{p,0}({\YFcpt}) 
\end{equation*}
by \cite{Brown} Corollary 1. 
Therefore 
\begin{equation}\label{eqn: chipE1}
\chi({}_pE^{1}) = \sum_{F} (-1)^{\dim U(F)} \chi({\GF}) \cdot h^{p,0}({\YFcpt}). 
\end{equation} 
 
Next we take the alternating sum of $\chi({}_pE^1)=\chi({}_pE^{\infty})$ over $0\leq p \leq n-1$. 
As for ${}_pE^{1}$, we have  
\begin{eqnarray*}
\sum_{p=0}^{n-1} (-1)^{p} \chi({}_pE^{1}) 
& = & \sum_{F} (-1)^{\dim U(F)} \sum_{p=0}^{n-1} (-1)^{p} h^{p,0}({\YFcpt}) \cdot \chi({\GF}) \\  
& = & \sum_{F} (-1)^{\dim U(F)} \chi_{{\rm hol}}({\YFcpt}) \cdot \chi({\GF}). 
\end{eqnarray*}
This is the right hand side of \eqref{eqn: Euler number}. 
On the other hand, 
$\sum_{p} (-1)^{p} \chi({}_pE^{\infty})$ is equal to   
\begin{equation*}
\sum_{p=0}^{n-1} \sum_{m\geq 1} (-1)^{p+m} \dim F^{n}{\Gr}^{W}_{2n-p}H^{2n-p-m}(X) 
- \sum_{p=0}^{n-1} (-1)^{p} \dim H^0(\Omega^{p}_{X})_{{\rm Eis}}. 
\end{equation*}
If we write $k=p+m$, the first term can be written as 
\begin{eqnarray*}
\sum_{p=0}^{n-1} \sum_{k>p} (-1)^{k} \dim F^{n}{\Gr}^{W}_{2n-p}H^{2n-k}(X)  
& = & \sum_{k\geq 1} (-1)^{k} \sum_{p=0}^{k-1} \dim F^{n}{\Gr}^{W}_{2n-p}H^{2n-k}(X) \\ 
& = & \sum_{k=1}^{n} (-1)^{k} \dim F^{n}H^{2n-k}(X)_{{\rm Eis}}. 
\end{eqnarray*}
Therefore $\sum_{p}(-1)^p \chi({}_pE^{\infty})$ is equal to the left hand side of \eqref{eqn: Euler number}. 
\end{proof}

The intermediate equality $\chi({}_pE^1)=\chi({}_pE^{\infty})$ in the proof 
would also deserve noticing especially when $p=0$. 

\begin{corollary}
We have 
\begin{equation*}\label{eqn: Euler number p=0}
-1 + \sum_{k=r}^{n} (-1)^{k} \dim {\Gr}^{W}_{2n}H^{2n-k}(X) 
= \sum_{F} (-1)^{\dim U(F)} \chi({\GF}). 
\end{equation*}
\end{corollary}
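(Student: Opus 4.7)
The strategy is to specialize the identity $\chi({}_pE_1) = \chi({}_pE_{\infty})$ used inside the proof of Proposition \ref{prop: Euler number} to the single level $p=0$, and then translate $\chi({}_0E_{\infty})$ (which is an alternating sum over ${\Gr}^{W}_{0}H^{\ast}(D)$) into an alternating sum over ${\Gr}^{W}_{2n}H^{2n-\ast}(X)$ via the excision long exact sequence and Poincar\'e duality.

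First, I would apply Theorem \ref{thm: main cohomological} with $p=0$: the $E_1$-page has $H^0(\Omega^{0}_{{\YFcpt}}) \simeq {\C}$ for every cusp $F$, so the same reorganization as in the proof of Proposition \ref{prop: Euler number} (collecting terms by the corank and substituting $l=n(i)-i-j$) gives
\begin{equation*}
\chi({}_0E_1) = \sum_{F} (-1)^{\dim U(F)} \chi({\GF}).
\end{equation*}
On the other side, since the corank spectral sequence of level $0$ converges to ${\Gr}^{W}_{0}H^{m-1}(D)$, we have
\begin{equation*}
\chi({}_0E_{\infty}) = \sum_{m\geq 1} (-1)^{m} \dim {\Gr}^{W}_{0}H^{m-1}(D) = -\sum_{k\geq 0}(-1)^{k} \dim {\Gr}^{W}_{0}H^{k}(D).
\end{equation*}

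Second, I would translate the right-hand side from $D$ to $X$. Taking ${\Gr}^{W}_{0}$ of the long exact sequence \eqref{eqn: H2n-kX to Hk-1D full} and using that $H^{k}({\Xcpt})$ is pure of weight $k$, so ${\Gr}^{W}_{0}H^{k}({\Xcpt})$ vanishes for $k\geq 1$ and equals ${\Q}$ for $k=0$, yields isomorphisms ${\Gr}^{W}_{0}H^{k}(D)\simeq {\Gr}^{W}_{0}H^{k+1}_{c}(X)$ for $k\geq 1$ and a short exact sequence $0 \to {\Q} \to {\Gr}^{W}_{0}H^{0}(D) \to {\Gr}^{W}_{0}H^{1}_{c}(X) \to 0$ in degree zero (note that $H^{0}_{c}(X)=0$ because $X$ is connected and non-compact, which is the case since $r\geq 2$). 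Poincar\'e duality then identifies $\dim {\Gr}^{W}_{0}H^{k}_{c}(X) = \dim {\Gr}^{W}_{2n}H^{2n-k}(X)$.

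Third, I would assemble. The above gives
\begin{equation*}
\sum_{k\geq 0}(-1)^{k}\dim {\Gr}^{W}_{0}H^{k}(D) = 1 - \sum_{k\geq 0}(-1)^{k}\dim {\Gr}^{W}_{2n}H^{2n-k}(X),
\end{equation*}
where the $+1$ comes from the contribution of $H^{0}({\Xcpt})$. Combining with $\chi({}_0E_1)=\chi({}_0E_{\infty})$ and using that $\dim {\Gr}^{W}_{2n}H^{2n-k}(X)$ vanishes for $k<r$ (by Borel--Serre) and for $k>n$ (since the weight $2n$ can only occur up to degree $\leq n$, by the Hodge triangle \eqref{eqn: Hodge triangle}) yields \eqref{eqn: Euler number p=0}.

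The only real subtlety is correctly tracking the constant $+1$ that comes out of $H^{0}({\Xcpt})={\Q}$; everything else is bookkeeping of alternating sums. The hypothesis $r\geq 2$ serves to guarantee $H^{0}_{c}(X)=0$ and to make the summation range $k=r,\dots,n$ on the left-hand side of \eqref{eqn: Euler number p=0} coincide with $k\geq 0$.
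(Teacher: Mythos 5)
Your proposal is correct and follows essentially the same route as the paper: both evaluate $\chi({}_0E_1)$ as the right-hand side and convert $\chi({}_0E_{\infty})=\sum_{m\geq 1}(-1)^m\dim W_0H^{m-1}(D)$ into the left-hand side via the long exact sequence for the pair $({\Xcpt},D)$, the purity of $H^{\ast}({\Xcpt})$, and Poincar\'e duality. The only (harmless) difference is the treatment of the degree-zero term: the paper uses $r\geq 2\Rightarrow H^1_c(X)=0$ to conclude that $D$ is connected, hence $\dim W_0H^0(D)=1$, whereas you keep the short exact sequence $0\to{\Q}\to W_0H^0(D)\to{\Gr}^{W}_{0}H^1_c(X)\to 0$ and absorb the last term into the alternating sum, where it vanishes anyway because $H^{2n-1}(X)=0$ when $r\geq 2$.
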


\begin{proof}
The left hand side is $\chi({}_0E^{\infty})$ by \eqref{eqn: pEinfinity}, 
and the right hand side is $\chi({}_0E^1)$ by \eqref{eqn: chipE1}. 
\end{proof}

Corollary \ref{cor: Euler number intro} is equivalent to Proposition \ref{prop: Euler number}. 
Indeed, by the weight spectral sequence, we have 
\begin{eqnarray*}
& & \dim F^nH^{2n-k}({\Xcpt}) - \dim F^nW_{2n-k}H^{2n-k}(X) \\ 
& = & \dim \im (F^{n-1}H^{2n-2-k}(D(1)) \stackrel{\textrm{Gysin}}{\longrightarrow} F^nH^{2n-k}({\Xcpt})) \\ 
& = & \dim \im (H^{0}(\Omega^{k}_{{\Xcpt}}) \to H^{0}(\Omega^{k}_{D(1)})). 
\end{eqnarray*}
The last expression is equal to 
$\dim H^0(\Omega^{k}_{X})_{{\rm Eis}}$ when $k<n$, 
and vanishes when $k=n$. 
Therefore the left hand side of \eqref{eqn: Euler number} can be rewritten as 
\begin{equation*}
\sum_{k=r}^{n}(-1)^k \dim F^nH^{2n-k}(X) - \chi_{{\rm hol}}({\Xcpt}). 
\end{equation*}
This is the left hand side of the identity in Corollary \ref{cor: Euler number intro}. 

\begin{remark}
One may also consider the alternating sum of $\dim F^kH^k(X)$,  
but since $F^kH^k(X)=F^kH^k({\Xcpt})$ when $k<n$ (\cite{Ma2}), 
this is almost identical to $\chi_{{\rm hol}}({\Xcpt})$. 
\end{remark}

\subsection{Non-neat case}\label{ssec: non-neat}

In this subsection we consider the case when ${\G}< {\rm Aut}({\D})$ is no longer neat. 
The quotient $X={\D}/{\G}$ is a quasi-projective variety with at most quotient singularities. 
By \cite{DeIII}, its singular cohomology still has a canonical mixed Hodge structure $(F^{\bullet}, W_{\bullet})$. 

The definitions of the arithmetic groups in \S \ref{ssec: stabilizer} still make sense. 
The quotient $Y_{F}=\mathcal{V}_{F}/\overline{{\G}(F)}'$ has singularities in general, but the group ${\GF}$ still acts on it. 
We denote by ${\YFcpt}$ a smooth projective model of $Y_{F}$. 
Then $H^{0}(\Omega^{p}_{{\YFcpt}})$ is a ${\GF}$-module. 

\begin{proposition}\label{prop: non-neat}
A spectral sequence of the same form as \eqref{eqn: cork ss} holds 
even when ${\G}$ is non-neat. 
\end{proposition}

For the proof of Proposition \ref{prop: non-neat}, 
we need the following lemma in group cohomology. 

\begin{lemma}\label{lem: group cohomology}
Let $0\to G_1 \to G_2 \to G_3 \to 0$ be an exact sequence of groups 
and $H_2$ be a normal subgroup of $G_2$ of finite index. 
We put $H_1=H_2\cap G_1$ and $H_3=H_2/H_1$. 
Let $V$ be a $G_2/H_1$-module which is a ${\Q}$-linear space. 
Then we have a natural isomorphism 
\begin{equation}\label{eqn: HSHS}
H^{k}(H_3, V)^{G_2/H_2} \: \simeq \: H^{k}(G_3, V^{G_1/H_1}). 
\end{equation}
\end{lemma}

\begin{proof}
Since $G_2/H_2$ is finite, its cohomology in degree $>0$ with coefficients in a ${\Q}$-linear space vanish. 
Therefore the Hochschild-Serre spectral sequence for 
$0\to H_3\to G_2/H_1 \to G_2/H_2 \to 0$ degenerates and gives 
\begin{equation*}
H^k(H_3, V)^{G_2/H_2} = H^{0}(G_2/H_2, H^k(H_3, V)) \simeq H^k(G_2/H_1, V). 
\end{equation*}
Similarly, by the Hochschild-Serre for 
$0\to G_1/H_1\to G_2/H_1 \to G_3 \to 0$ 
with $G_1/H_1$ finite, we obtain 
\begin{equation*}
H^k(G_2/H_1, V) \simeq H^k(G_3, H^{0}(G_1/H_1, V)) = H^{k}(G_3, V^{G_1/H_1}). 
\end{equation*}
This proves Lemma \ref{lem: group cohomology}. 
\end{proof}

Now we prove Proposition \ref{prop: non-neat}. 

\begin{proof}[(Proof of Proposition \ref{prop: non-neat})]
We choose a neat normal subgroup ${\G}_{0}\lhd {\G}$ of finite index. 
The quotient group $G={\G}/{\G}_{0}$ acts on the smooth variety $X_{0}={\D}/{\G}_{0}$ with $X=X_{0}/G$. 
The mixed Hodge structure on $H^k(X)$ is identified with the $G$-invariant part 
of the mixed Hodge structure on $H^k(X_{0})$ (see \cite{Ma2} \S 8). 
We use the notations 
${\G}_{0}(F), {\G}_{0}(F)', {\G}_{F,\ell}^{0}, Y_{F}^{0}, \mathcal{C}(i)_{0}$ 
for ${\G}_{0}$ with obvious meaning. 

By taking the toroidal compactification of $X_{0}$ in a $G$-invariant way, 
we obtain a $G$-equivariant corank spectral sequence for ${\G}_{0}$. 
Taking its $G$-invariant part, we obtain a spectral sequence of the form 
\begin{equation*}\label{eqn: cork ss non-neat}
E^{1}_{i,m-i} = 
\left( \bigoplus_{F\in \mathcal{C}(i)_{0}} H^{n(i)-m}({\G}_{F,\ell}^{0}, H^0(\Omega_{\overline{Y}_{F}^{0}}^{p})^{\ast}) \right)^{G}
\; \; \Rightarrow \; \;  
E^{\infty}_{m} \simeq {\Gr}^{W}_{2n-p}F^{n}H^{2n-p-m}(X). 
\end{equation*}

We shall rewrite the $E^{1}$-terms. 
We abbreviate $V_{F}=H^0(\Omega_{\overline{Y}_{F}^{0}}^{p})^{\ast}$. 
The group $G$ acts on $\mathcal{C}(i)_{0}$ with quotient $\mathcal{C}(i)$. 
For each $F\in \mathcal{C}(i)_{0}$, we denote by $G_{F}<G$ its stabilizer. 
Then we have 
\begin{equation*}
\left( \bigoplus_{F\in \mathcal{C}(i)_{0}} H^{k}({\G}_{F,\ell}^{0}, V_{F}) \right)^{G} 
\: \: \simeq \: \:  
\bigoplus_{F\in \mathcal{C}(i)} H^{k}({\G}_{F,\ell}^{0}, V_{F})^{G_{F}}. 
\end{equation*}
Now we apply Lemma \ref{lem: group cohomology} to 
$G_{1}={\G}(F)'$, $G_{2}={\GFZ}$, $G_{3}={\GF}$ and $H_2={\G}_{0}(F)$. 
Then $H_{1}={\G}_{0}(F)'$, $H_{3}={\G}_{F,\ell}^{0}$ and $G_{2}/H_{2}\simeq G_{F}$. 
Therefore \eqref{eqn: HSHS} takes the form  
\begin{equation*}
H^{k}({\G}_{F,\ell}^{0}, V_{F})^{G_{F}} \simeq H^{k}({\GF}, V_{F}^{{\GFZ}'/{\G}_{0}(F)'}). 
\end{equation*}
Since ${\GFZ}'/{\G}_{0}(F)'$ acts on $Y_{F}^{0}$ with quotient $Y_{F}$, we have 
$V_{F}^{{\GFZ}'/{\G}_{0}(F)'}=H^0(\Omega_{{\YFcpt}}^{p})^{\ast}$. 
\end{proof}


\section{Examples}\label{sec: example 1}

In this section we look at the corank spectral sequences 
for some classical modular varieties. 

\subsection{Hilbert modular varieties}\label{ssec: Hilbert}

Let $X={\D}/{\G}$ be a Hilbert modular variety of dimension $n>1$, 
where ${\D}$ is the $n$-fold product of the upper half plane and 
${\G}$ is a neat arithmetic subgroup of ${\rm SL}(2, K)$ for a totally real number field $K$ of degree $n$. 
The Baily-Borel compactification $X^{bb}$ has only $0$-dimensional cusps with $n(1)=n$. 
Hence $H^{k}(X)$ is pure when $k<n$ (\cite{HZII}). 

Harder \cite{Ha2} studied the restriction map 
$r_{\ast} \colon H^{\ast}(X) \to H^{\ast}(\partial X^{bs})$ 
to the Borel-Serre boundary $\partial X^{bs}$. 
By using analytic continuation of Eisenstein series, 
he proved that $r_{2n-k}$ is surjective when $2\leq k \leq n$. 
Based on Harder's result, Ziegler (\cite{Fr} \S III.7) proved that 
${\Gr}^{W}_{l}H^{2n-k}(X)=0$ for $2n-k<l<2n$ and 
${\Gr}^{W}_{2n}H^{2n-k}(X)$ is mapped isomorphically to $H^{2n-k}(\partial X^{bs})$ 
when $2\leq k \leq n$. 
In what follows, as an application of the corank spectral sequence, 
we give a purely Hodge-theoretic proof of the results of Harder and Ziegler which does not use Eisenstein series. 

Let $2\leq k \leq n$. 
The corank spectral sequence of level $p=0$ in Proposition \ref{prop: p=0} 
degenerates at $E^{1}$ and hence 
\begin{equation*}\label{eqn: dim Hilbert}
\dim {\Gr}^{W}_{2n}H^{2n-k}(X) = \sum_{F} \dim H^{n-k}({\GF}). 
\end{equation*}
On the other hand, it is known that 
\begin{equation*}\label{eqn: BS cohomology Hilbert}
\dim H^{2n-k}( \partial_{F}X^{bs} ) = \dim H^{n-k}({\GF}), 
\end{equation*}
where $\partial_{F}X^{bs}$ is the component of $\partial X^{bs}$ over $X_{F}$ 
(see \cite{Ha2} p.142 or \cite{Fr} Proposition III.2.1). 
It follows that 
\begin{equation}\label{eqn: dim equality Hilbert}
\dim {\Gr}^{W}_{2n}H^{2n-k}(X) = \dim H^{2n-k}(\partial X^{bs}). 
\end{equation}

Next we look at the fundamental exact sequence (\cite{Ha2}) 
\begin{equation*}
H^{2n-k}_{c}(X) \stackrel{i_{2n-k}}{\to} H^{2n-k}(X) \stackrel{r_{2n-k}}{\to} H^{2n-k}(\partial X^{bs}). 
\end{equation*}
Since $i_{2n-k}$ is a morphism of mixed Hodge structures and 
$H^{2n-k}_{c}(X)$ has weight $\leq 2n-k$ (see \cite{PS} \S 5.5.2), 
we have ${\rm Im}(i_{2n-k}) \subset W_{2n-k}H^{2n-k}(X)$. 
Therefore 
\begin{eqnarray*}
\dim {\Gr}^{W}_{2n}H^{2n-k}(X) & \leq & \dim (H^{2n-k}(X)/{\rm Im}(i_{2n-k})) \\ 
& = & \dim {\rm Im}(r_{2n-k}) \; \leq \; \dim H^{2n-k}(\partial X^{bs}). 
\end{eqnarray*}
Then \eqref{eqn: dim equality Hilbert} shows that the two inequalities here are actually equalities. 
In this way, we find simultaneously that 
$r_{2n-k}$ is surjective, 
${\Gr}^{W}_{l}H^{2n-k}(X)=0$ for $2n-k<l<2n$, 
and ${\rm Im}(i_{2n-k}) = W_{2n-k}H^{2n-k}(X)$. 

This proof can be regarded as a simplification of the argument of Ziegler. 
Indeed, \eqref{eqn: dim Hilbert} was also proved in \cite{Fr} p.193 -- p.198 by a more direct argument. 
The novelty here is to reverse the order of argument 
from surjectivity $\Rightarrow$ MHS to MHS $\Rightarrow$ surjectivity. 

Although $H^{\ast}(\partial X^{bs})$ has a natural mixed Hodge structure (\cite{HZII}), 
notice that we did not make use of it in our argument.

\subsection{Siegel modular varieties}\label{ssec: Siegel}

Let $X={\D}/{\G}$ be a Siegel modular variety of degree $g>1$, 
where ${\D}$ is the Siegel upper half space of degree $g$ and 
${\G}$ is a neat arithmetic subgroup of ${\rm Sp}(2g, {\Q})$. 
We have $n=g(g+1)/2$ and the ${\Q}$-rank is $g$. 
A cusp $F$ of corank $g'\leq g$ corresponds to a rational isotropic subspace $I$ of ${\Q}^{2g}$ of dimension $g'$. 
Then $X_{F}'$ is a Siegel modular variety of degree $g-g'$ associated to $I^{\perp}/I$, 
$Y_{F}$ is isogenous to the $g'$-fold self-product of a universal family of abelian varieties over $X_{F}'$, 
$U(F)_{{\Q}}$ is identified with ${\rm Sym}^2I$, 
$C(F)$ is the cone of positive-definite quadratic forms, and 
${\GF}$ is a neat subgroup of ${\rm SL}(I)\simeq {\rm SL}(g', {\Q})$. 
In particular, we have $n(g')=g'(g'+1)/2$. 
 
When $p=\dim Y_{F}$, $H^0(\Omega^{p}_{{\YFcpt}})=H^0(K_{{\YFcpt}})$ is isomorphic to 
the space $S\!_{g+1}({\GFhd})$ of Siegel cusp forms of weight $g+1$ on $X_{F}'$ (see \cite{Hatada}, \cite{Ma1}). 
The ${\GF}$-action on $H^0(K_{{\YFcpt}})$ comes from the action of 
$G={\GFh}/{\GFhd}\simeq {\GF}/{\G}_{F,\ell}'$ on $S\!_{g+1}({\GFhd})$. 
Hence we have 
\begin{equation*}
H^k({\GF}, H^0(K_{{\YFcpt}})^{\ast}) \simeq 
(H^k({\G}_{F,\ell}') \otimes S\!_{g+1}({\GFhd})^{\ast})^{G} 
\end{equation*}
by the same argument as \eqref{eqn: HS inv}. 
Similarly, when $p=1$, 
the proof of Proposition \ref{prop: p=1} shows that 
$H^0(\Omega^{1}_{{\YFcpt}}) \ne 0$ only when $g'=g-1$, 
and in that case, 
$H^0(\Omega^{1}_{{\YFcpt}})\simeq H^0(\Omega^{1}_{\overline{X_{F}'}})$ is isomorphic to 
the space of cusp forms of weight $2$ on the modular curve $X_{F}'$. 
Then we have similarly 
\begin{equation*}
H^k({\GF}, H^0(\Omega^{1}_{{\YFcpt}})^{\ast}) \simeq 
(H^k({\G}_{F,\ell}') \otimes S\!_{2}({\GFhd})^{\ast})^{G}. 
\end{equation*}
%
%
For general $1<p<\dim Y_{F}$, holomorphic $p$-forms on ${\YFcpt}$ are in principle controlled by 
vector-valued Siegel modular forms on $X_{F}'$ via the holomorphic Leray spectral sequence for $Y_{F}\to X_{F}'$. 
The $E^{1}$-terms of the corank spectral sequences are more complicated mixture of 
group cohomology and modular forms. 

Finally, by the formula of Harder \cite{Ha1}, we have 
\begin{equation*}
\chi({\GF}) = C\cdot \prod_{k=2}^{g'} \zeta(1-k) = 0 
\end{equation*}
when $g'\geq 3$ where $C$ is a suitable constant. 
It follows that only cusps of corank $g' = 1, 2$ survive in the right hand side of \eqref{eqn: Euler number}.

\subsection{Orthogonal modular varieties}\label{ssec: orthogonal}

Let $V$ be a rational quadratic space of signature $(2, n)$ with $n\geq 3$ and Witt index $2$. 
Let $X={\D}/{\G}$ be an associated orthogonal modular variety, 
where ${\D}$ is an open set of the isotropic quadratic in ${\proj}V_{{\C}}$ and 
${\G}$ is a neat subgroup of ${\rm O}(V)$. 
We have $\dim X=n$. 
The ${\Q}$-rank is $2$, and we have only $0$-dimensional and $1$-dimensional cusps which 
correspond to isotropic lines and isotropic planes in $V$ respectively. 
If $F$ is a $0$-dimensional cusp and $I$ is the corresponding isotropic line, 
then $Y_{F}=X_{F}$, $U(F)_{{\Q}}$ is identified with the Lorentzian space $I\otimes_{{\Q}} (I^{\perp}/I)\simeq I^{\perp}/I$, 
and ${\GF}$ is a neat subgroup of ${\rm O}(I^{\perp}/I)$. 

Since $n(1)=1$ and $n(2)=n$, 
${\Gr}^{W}_{l}H^{2n-k}(X)$ has only Hodge classes if $l>2n-k+1$ (\cite{Ma2}). 
Thus we have practically only corank spectral sequence of level $p=0$ which computes ${\Gr}^{W}_{2n}H^{2n-\ast}(X)$. 
By Proposition \ref{prop: E1 degeneration}, this gives 
\begin{equation*}
{\Gr}^{W}_{2n}H^{2n-k}(X) \simeq \bigoplus_{F\in \mathcal{C}(2)} H^{n-k}({\GF}) 
\end{equation*}
for $3\leq k \leq n$. 
Cohomology of hyperbolic groups enters here. 
In the remaining case $k=2$, we have  
\begin{equation*}
{\Gr}^{W}_{2n}H^{2n-2}(X) \simeq 
\ker \left( \bigoplus_{F\in \mathcal{C}(2)}H^{n-2}({\GF}) \to \bigoplus_{F\in \mathcal{C}(1)} {\C} \right) 
\end{equation*}
by \eqref{eqn: E3/2}.

\appendix

\section{Retraction via barycentric subdivision}\label{sec: appendix}

Let $F$ be a cusp of a Hermitian symmetric domain ${\D}$ acted on by a neat arithmetic group ${\G}$ as in \S \ref{sec: modular}. 
We recall some notations from \S \ref{sec: modular}. 
Let $C(F)$ be the distinguished open homogeneous cone in $U(F)$ 
and $C(F)^{\ast}\subset U(F)$ be the union of $C(F)$ with the boundary cones $C(F')\subset U(F')$ for $F' \succ F$. 
Let $\Sigma_{F}$ be a ${\GF}$-admissible cone decomposition of $C(F)^{\ast}$, 
and $\partial\Sigma_{F}$ be its restriction to the boundary $C(F)^{\ast}-C(F)$. 
We assume that $\Sigma_{F}$ is \textit{simplicial}, i.e., 
every cone $\sigma\in \Sigma_{F}$ is generated by $\dim \sigma$ vectors, 
and satisfies the following two conditions: 
\begin{enumerate}
\item Two rays of a cone $\sigma\in \Sigma_{F}$ are not ${\GF}$-equivalent. 
\item For each cone $\sigma\in \Sigma_{F}$, the intersection $\sigma \cap \partial\Sigma_{F}$ 
is contained in a single boundary component $U(F')$ and hence is a face of $\sigma$. 
\end{enumerate} 
Both properties descend to ${\GF}$-invariant subdivisions. 
Our fan in \S \ref{sec: corank ss} satisfies (1) by the SNC condition. 
The property (2) can be realized by taking the barycentric subdivision before taking the smooth projective SNC subdivision.  

Let $(\Delta_{F}, \partial\Delta_{F})$ be the projectivization of $(\Sigma_{F}, \partial\Sigma_{F})$. 
Then $\Delta_{F}$ is an infinite simplicial complex, $\partial\Delta_{F}$ is a subcomplex of $\Delta_{F}$, 
and $\Delta_{F}$ is locally finite outside $\partial\Delta_{F}$. 
In what follows, we use the notation $\sigma$ exclusively for simplices; 
cones will no longer appear except in the beginning of \S \ref{ssec: glue}. 
We write $\Delta_{F}-\partial\Delta_{F}$ for the topological space underlying $\Delta_{F}$ minus that underlying $\partial\Delta_{F}$. 
We denote by $C_{c}^{\bullet}(\Delta_{F}, \partial\Delta_{F})$ the complex of relative simplicial cochains 
with compact support (i.e., nonzero for only finitely many simplices), 
which makes sense by the local finiteness of $\Delta_{F}$ outside $\partial\Delta_{F}$ (cf.~\cite{Hatcher} p.242). 
The purpose of this appendix is to prove the following proposition 
which was used in the proof of Proposition \ref{prop: acyclic}.

\begin{proposition}\label{prop: retraction}
There exists a simplicial complex $\Delta_{F}^{+}$, constructed as a subcomplex of the barycentric subdivision of $\Delta_{F}$, 
and a subcomplex $\partial\Delta_{F}^{+}$ of $\Delta_{F}^{+}$ 
satisfying the following properties: 

(1) $\Delta_{F}^{+}$ is locally finite and ${\GF}$-invariant. 

(2) $\Delta_{F}^{+}-\partial\Delta_{F}^{+}$ is homeomorphic to $\Delta_{F}-\partial\Delta_{F}$. 

(3) $C_{c}^{\bullet}(\Delta_{F}^{+}, \partial\Delta_{F}^{+})$ is quasi-isomorphic to $C_{c}^{\bullet}(\Delta_{F}, \partial\Delta_{F})$. 
\end{proposition}

The pair $(\Delta_{F}^{+}, \partial\Delta_{F}^{+})$ can be viewed as a retraction of $(\Delta_{F}, \partial\Delta_{F})$ toward interior. 
Its construction as well as the proof of the required properties are done in two steps. 
In \S \ref{ssec: single simplex}, we give a general construction of retraction 
for a single simplex in an Euclidean space. 
In \S \ref{ssec: glue}, we glue the construction in \S \ref{ssec: single simplex} 
to obtain the retraction $(\Delta_{F}^{+}, \partial\Delta_{F}^{+})$ globally. 
The canonicity of barycentric subdivision is crucial for making the gluing possible. 

Our construction is general enough so that 
it also applies to general finite simplicial complexes. 
With extra effort, it can also be extended to infinite simplicial complexes with certain group action (Remark \ref{remark: group action}), 
of which Proposition \ref{prop: retraction} is a special case. 
However, for simplicity of exposition, 
we take a short-cut in the gluing process by making use of the integral structure ${\UFZ}=U(F)\cap {\G}$ of $U(F)$, 
which is available only in the present specific situation.

\subsection{Single simplex}\label{ssec: single simplex}

Let $\Delta$ be a $d$-simplex in an Euclidean space ${\R}^N$, 
i.e., an (ordered) collection of $d+1$ points with $d\leq N$ in linearly general position. 
Elements of $\Delta$ are called the vertices of $\Delta$. 
As usual, we tacitly identify $\Delta$ with the convex hull of the vertices. 
Let $\Delta_{0}$ be a face of $\Delta$. 
We allow $\Delta_{0}$ to be empty or the whole $\Delta$. 
Let $\Delta_{1}\subset \Delta$ be the face consisting vertices not contained in $\Delta_{0}$. 

Let $\Delta_{brc}$ be the barycentric subdivision of $\Delta$ as defined in \cite{Hatcher} p.119. 
Simplices $\sigma$ of $\Delta_{brc}$ are in one-to-one correspondence with 
flags $\sigma_{0} \prec \cdots \prec \sigma_{k}$ of faces of $\Delta$. 
The vertices of $\sigma$ are the barycenters of $\sigma_{0}, \cdots, \sigma_{k}$. 

\begin{lemma}\label{lem: alternative}
A top-dimensional simplex $\sigma$ of $\Delta_{brc}$ intersects either with $\Delta_{0}$ or with $\Delta_{1}$. 
In the former case $\sigma$ is disjoint from $\Delta_{1}$; 
in the latter case $\sigma$ is disjoint from $\Delta_{0}$. 
\end{lemma}

\begin{proof}
Since $\sigma$ is top-dimensional, it corresponds to a maximal flag 
$\sigma_{0} \prec \cdots \prec \sigma_{d}$. 
Then this alternative is determined according to which $\Delta_{0}$ or $\Delta_{1}$ 
the first member $\sigma_{0}$ (vertex) of the flag belongs to. 
\end{proof}

\begin{definition}\label{def: retraction}
We define a subcomplex $\Delta^{+}$ of $\Delta_{brc}$ 
as the union of all top-dimensional simplices meeting $\Delta_{1}$ ($\Leftrightarrow$ disjoint from $\Delta_{0}$). 
We also define a subcomplex $\partial\Delta^{+}$ of $\Delta_{brc}$ as the union of all simplices 
which are disjoint from both $\Delta_{0}$ and $\Delta_{1}$. 
We call the pair $(\Delta^{+}, \partial\Delta^{+})$ the \textit{retraction} of $(\Delta, \Delta_{0})$. 
\end{definition}

See Figure \ref{figure: retraction} for an illustration of $(\Delta^{+}, \partial\Delta^{+})$ in the case $d=2$. 
In the extreme case $\Delta_{0}=\emptyset$, we have $(\Delta^{+}, \partial\Delta^{+}) = (\Delta_{brc}, \emptyset)$; 
in the opposite case $\Delta_{0}=\Delta$, we have $\Delta^{+}=\emptyset$. 
We write $\Delta^{-}$ for the union of all top-dimensional simplices of $\Delta_{brc}$ meeting $\Delta_{0}$. 
Then $\Delta_{brc} = \Delta^{+}\cup \Delta^{-}$ by Lemma \ref{lem: alternative}.

\begin{figure}[h]
\includegraphics[height=45mm, width=60mm]{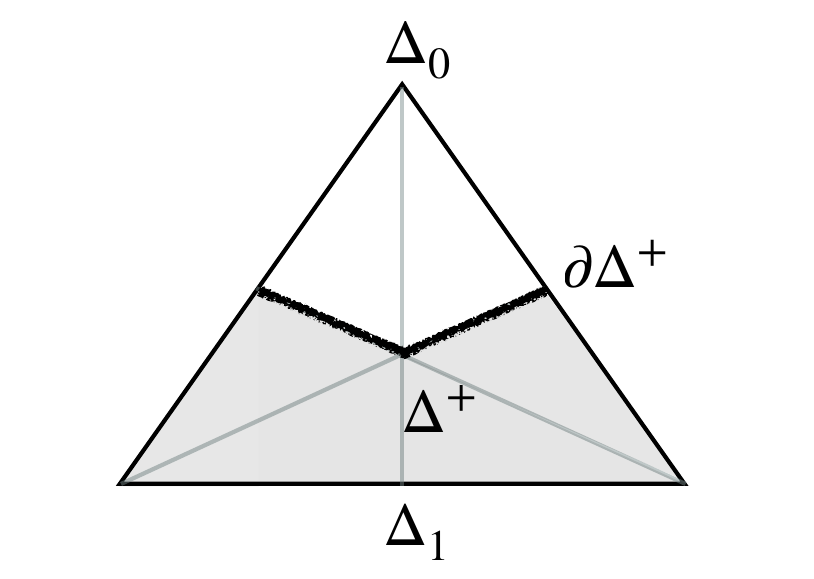}
\caption{$(\Delta^{+}, \partial\Delta^{+})$\label{figure: retraction}}
\end{figure}

\begin{lemma}\label{lem: Sigma+ criterion}
Let $\sigma$ be a simplex of $\Delta_{brc}$ and 
$\sigma_{0} \prec \cdots \prec \sigma_{k}$ be the corresponding flag of faces of $\Delta$. 

(1) We have 
\begin{equation*}
\sigma \subset \Delta^{+} \; \; \Leftrightarrow \; \; 
\sigma \cap \Delta_{0} = \emptyset \; \; \Leftrightarrow \; \; 
\sigma_{0} \cap \Delta_{1} \ne \emptyset \; \; \Leftrightarrow \; \; 
\sigma_{0} \not\subset \Delta_{0}. 
\end{equation*}

(2) $\sigma$ is contained in $\partial\Delta^{+}$ if and only if 
$\sigma_{0}\cap \Delta_{0} \ne \emptyset$ and $\sigma_{0}\cap \Delta_{1} \ne \emptyset$. 
In particular, $\partial \Delta^{+}$ is a subcomplex of $\Delta^{+}$ 
and we have $\partial\Delta^{+}=\Delta^{+}\cap \Delta^{-}$. 
\end{lemma}

\begin{proof}
(1) We have $\sigma\subset \Delta^{+}$ if and only if 
$\sigma_{0} \prec \cdots \prec \sigma_{k}$ can be extended to a maximal flag 
$\sigma_{0}' \prec \cdots \prec \sigma_{d}'$ such that the vertex $\sigma_{0}'$ belongs to $\Delta_{1}$ 
(see the proof of Lemma \ref{lem: alternative}). 
This is equivalent to $\sigma_{0}\cap \Delta_{1} \ne \emptyset$. 
The equivalence 
$\sigma_{0} \cap \Delta_{1} \ne \emptyset \Leftrightarrow \sigma_{0} \not\subset \Delta_{0}$ 
is obvious. 
If $\sigma\cap \Delta_{0} = \emptyset$, 
the barycenter of $\sigma_{0}$ is not contained in $\Delta_{0}$, and so $\sigma_{0} \not\subset \Delta_{0}$. 
Conversely, if $\sigma\cap \Delta_{0} \ne \emptyset$, 
one of the vertices of $\sigma$ is contained in $\Delta_{0}$, 
which implies that one of the members of the flag $\sigma_{0} \prec \cdots \prec \sigma_{k}$ 
is contained in $\Delta_{0}$. 
In particular, we have $\sigma_{0}\subset \Delta_{0}$. 

(2) The first assertion follows from the equivalence of the second and the third conditions of (1) 
for both $\Delta^{+}$ and $\Delta^{-}$. 
The equality $\partial\Delta^{+}=\Delta^{+}\cap \Delta^{-}$ 
follows from the equivalence of the first and the second conditions of (1) 
for both $\Delta^{+}$ and $\Delta^{-}$.  
\end{proof}

\begin{corollary}\label{cor: partialDelta+}
When $\Delta_{0}, \Delta_{1}\ne \emptyset$, $\partial \Delta^{+}$ is of codimension $1$ in $\Delta^{+}$. 
\end{corollary}

\begin{proof}
Every flag $\sigma_{0}\prec \cdots \prec \sigma_{k}$ with 
$\sigma_{0}\cap \Delta_{0} \ne \emptyset$ and $\sigma_{0}\cap \Delta_{1} \ne \emptyset$ 
can be extended to a flag 
$\sigma_{0}'\prec \cdots \prec \sigma_{d-1}'$ of length $d$ with 
$\sigma_{0}'\cap \Delta_{0} \ne \emptyset$ and $\sigma_{0}'\cap \Delta_{1} \ne \emptyset$, 
by taking $\sigma_{0}'$ to be a $1$-dimensional face of $\sigma_{0}$ 
one of whose vertex is from $\sigma_{0}\cap \Delta_{0}$ and 
another from $\sigma_{0}\cap \Delta_{1}$. 
\end{proof}

The construction of $(\Delta^{+}, \partial\Delta^{+})$ is compatible with restriction as follows. 
Let $\sigma$ be a face of $\Delta$. 
If we put $\sigma_{0}=\sigma \cap \Delta_{0}$, 
we can construct the retraction of $(\sigma, \sigma_{0})$ 
by the same procedure as for $(\Delta, \Delta_{0})$. 
We denote it by $(\sigma^{+}, \partial\sigma^{+})$. 

\begin{lemma}\label{lem: retract restriction}
We have $\sigma^{+}=\sigma\cap \Delta^{+}$ and 
$\partial\sigma^{+}=\sigma\cap \partial\Delta^{+}$. 
\end{lemma}

\begin{proof}
The barycentric subdivision $\sigma_{brc}$ of $\sigma$ is the restriction of $\Delta_{brc}$ to $\sigma$. 
Then, by comparing Lemma \ref{lem: Sigma+ criterion} (1) for both $(\Delta, \Delta_{0})$ and $(\sigma, \sigma_{0})$, 
we see that a simplex of $\sigma_{brc}$ is contained in $\sigma^{+}$ if and only if it is contained in $\Delta^{+}$. 
The same holds for $\partial\sigma^{+}$. 
\end{proof}

\begin{corollary}\label{cor: boundary retraction}
The boundary of $\Delta^{+}$ (in the usual sense) is the union 
\begin{equation*}
\partial\Delta^{+} \cup \bigcup_{{\rm codim}(\sigma)=1}\sigma^{+}, 
\end{equation*}
where $\sigma$ ranges over codimension $1$ faces of $\Delta$. 
The part $\bigcup_{\sigma}\sigma^{+}$ is the intersection of $\Delta^{+}$ with the boundary of $\Delta$, 
and $\partial\Delta^{+}$ is the closure of the intersection of  the boundary of $\Delta^{+}$ with the interior of $\Delta$. 
\end{corollary}

\begin{proof}
The assertion that $\bigcup_{\sigma}\sigma^{+}$ is the intersection of $\Delta^{+}$ with the boundary of $\Delta$ 
follows from Lemma \ref{lem: retract restriction} for codimension $1$ faces of $\Delta$. 
Since $\Delta=\Delta^{+}\cup \Delta^{-}$, 
the remaining part of the boundary of $\Delta^{+}$ is $\Delta^{+}\cap \Delta^{-} = \partial\Delta^{+}$. 
\end{proof}

Next we construct a homeomorphism between the interiors. 
This makes essential use of the realization of $\Delta$ inside ${\R}^{N}$. 

\begin{lemma}\label{lem: retraction map}
We have a homeomorphism 
$\Delta - \Delta_{0} \stackrel{\simeq}{\to} \Delta^{+}-\partial\Delta^{+}$ 
which is identity on $\Delta_{1}$ 
and is compatible with restriction to the faces of $\Delta$. 
\end{lemma} 

\begin{proof}
We may assume that $\Delta_{0}, \Delta_{1} \ne \emptyset$. 
For $x\in \Delta_{0}$ and $y\in \Delta_{1}$, 
let $I_{x,y}$ be the line segment joining $x$ and $y$. 
Points on $I_{x,y}$ are expressed as $tx+(1-t)y$ with $0\leq t \leq 1$. 
Every point of $\Delta-\Delta_{0}-\Delta_{1}$ lies on a unique such line segment. 
Indeed, $\Delta$ is the union of convex hulls of $x$ and $\Delta_{1}$ over all $x\in \Delta_{0}$, 
and two such convex hulls intersect only at $\Delta_{1}$. 

\begin{claim}\label{claim: Ixy}
$I_{x,y}$ intersects with $\partial\Delta^{+}$ only once. 
\end{claim}

\begin{proof}
We normalize $\Delta$ to be the standard $d$-simplex in ${\R}^{d+1}$. 
We denote by $p_{0}, \cdots, p_{d}$ the vertices of $\Delta$. 
Let $\sigma$ be a top-dimensional simplex of $\Delta_{brc}$ and 
$p_{i_{0}}\prec p_{i_{0}}p_{i_{1}} \prec \cdots \prec p_{i_{0}}\cdots p_{i_{d}}$ be the corresponding flag of $\Delta$. 
A point $p$ of $\Delta$ belongs to $\sigma$ if and only if 
the order $p_{i_{0}}, \cdots, p_{i_{d}}$ agrees with that of distance from $p$. 
We express this situation as $p_{i_{0}}< \cdots <p_{i_{d}}$. 
(We allow $\leq$, but omit this for simplicity.) 
Thus a point $p$ determines an order of the vertices by the distance from $p$, 
which in turn determines the subdivided simplex containing $p$. 

Now we let $p$ move on $I_{x,y}$ from $x\in \Delta_{0}$ to $y\in \Delta_{1}$. 
At the starting point $p=x$, 
every vertex of $\Delta_{0}$ is nearer to $p$ than those of $\Delta_{1}$. 
Thus, if we denote by $p_{0,\ast}$, $p_{1,\ast}$ the vertices of $\Delta_{0}$, $\Delta_{1}$ respectively, 
the order around $p=x$ can be written as 
\begin{equation*}
p_{0,1} < p_{0,2} < \cdots < p_{0,m} < p_{1,1} < \cdots < p_{1,l}. 
\end{equation*}
As $p$ moves on $I_{x,y}$ toward $y\in \Delta_{1}$, this order changes: 
the vertices $p_{1,\ast}$ of $\Delta_{1}$ gradually shift to left 
without changing the order inside $\Delta_{1}$. 
(Actually we do not need the last property.) 
The point here is that once $p_{1,i}$ overtakes $p_{0,j}$, $p_{0,j}$ does not overtake $p_{1,i}$ after that:  
no further turnover among them. 
This is because $p$ moves on a line. 
When $p$ arrives around $y$, the order becomes 
 \begin{equation*}
p_{1,1} < \cdots < p_{1,l} < p_{0,1} < \cdots < p_{0,m}. 
\end{equation*}
As long as $p_{0,1}$ stays at the left end, $p$ belongs to $\Delta^{-}$; 
after $p_{1,1}$ comes to the left end, $p$ belongs to $\Delta^{+}$ (cf.~the proof of Lemma \ref{lem: alternative}). 
Thus the moment $p_{1,1}$ overtakes $p_{0,1}$ is the intersection point of $I_{x,y}$ with $\partial \Delta^{+}$, 
and this occurs only once. 
\end{proof}

We go back to the proof of Lemma \ref{lem: retraction map}. 
By Claim \ref{claim: Ixy}, there is a value $0<t(x,y)<1$ such that 
$tx+(1-t)y$ is contained in $\Delta^{+}$ if $t\leq t(x, y)$, 
and contained in $\Delta^{-}$ if $t\geq t(x, y)$. 
As a function of $(x, y)$, $t(x, y)$ is piecewise linear. 
Now the multiplication by $t(x, y)$
\begin{equation*}
[0, 1] \to [0, \: t(x, y)], \quad t\mapsto t(x, y)\cdot t, 
\end{equation*}
gives a bijection 
$I_{x,y}\to I_{x,y}\cap \Delta^{+}$ which is identity on $I_{x,y}\cap \Delta_{1}= \{ y \}$. 
By varying $(x, y)$, we obtain a homeomorphism 
$\Delta-\Delta_{0} \to \Delta^{+}-\partial\Delta^{+}$ 
which is identity on $\Delta_{1}$. 
Since this is defined on each $I_{x,y}$, 
compatibility with restriction follows from Lemma \ref{lem: retract restriction}. 
\end{proof}

Finally, we compare cochain complexes. 
We define a retraction map 
\begin{equation}\label{eqn: qis single}
R : C^{\bullet}(\Delta^{+}, \partial\Delta^{+}) \to C^{\bullet}(\Delta, \Delta_{0}) 
\end{equation}
between the relative simplicial cochain complexes as follows. 
Let $\varphi\in C^{k}(\Delta^{+}, \partial\Delta^{+})$. 
For a $k$-simplex $\sigma$ of $\Delta$, we write the barycentric subdivision of $\sigma$ as 
$\sigma=\sum_{i}\sigma^{-}_{i}+\sum_{j}\sigma^{+}_{j}$ 
where $\sigma^{-}_{i}$ intersects with $\Delta_{0}$ and $\sigma^{+}_{j}$ intersects with $\Delta_{1}$. 
We write $\sigma^{+}=\sum_{j}\sigma^{+}_{j}$. 
(This notation is compatible with that in Lemma \ref{lem: retract restriction}.) 
Then we define $R\varphi \in C^{k}(\Delta)$ by 
\begin{equation*}
(R\varphi) (\sigma) = \varphi(\sigma^{+}). 
\end{equation*}
If $\sigma\subset \Delta_{0}$, we have $\sigma^{+}=0$ and so $(R\varphi)(\sigma)=0$. 
Hence $R\varphi \in C^{k}(\Delta, \Delta_{0})$. 

\begin{lemma}\label{lem: cochain map single}
$R$ is a cochain map and is a quasi-isomorphism. 
\end{lemma}

\begin{proof}
Let $\varphi\in C^{k}(\Delta^{+}, \partial\Delta^{+})$. 
We take a $(k+1)$-dimensional face $\sigma$ of $\Delta$ not contained in $\Delta_{0}$. 
We denote by $\delta$ and $\partial'$ the coboundary and boundary maps respectively. 
(The notation $\partial$ was already used in Definition \ref{def: retraction}.) 
Then, by Corollary \ref{cor: boundary retraction} for $(\sigma, \sigma_{0})$, we have 
\begin{equation*}
(R\circ \delta(\varphi))(\sigma) - (\delta\circ R(\varphi))(\sigma) 
\: = \: 
\varphi( \partial'(\sigma^{+}) - (\partial'\sigma)^{+})  
\: = \: 
\varphi(\partial\sigma^{+}).  
\end{equation*}
Since $\partial\sigma^{+}$ is contained in $\partial\Delta^{+}$, it is annihilated by $\varphi$. 
Hence $R\circ \delta = \delta\circ R$. 

It remains to verify that $R$ is a quasi-isomorphism. 
When $\Delta_{0}\ne \emptyset$, this is obvious because both 
$H^{\bullet}(\Delta^{+}, \partial\Delta^{+})$ and $H^{\bullet}(\Delta, \Delta_{0})$ vanish. 
When $\Delta_{0}=\emptyset$, 
we have $(\Delta^{+}, \partial\Delta^{+})=(\Delta_{brc}, \emptyset)$. 
Then $R$ is the ordinary subdivision map for cochains and hence is a quasi-isomorphism. 
\end{proof}

\subsection{Gluing}\label{ssec: glue}

We go back to the simplicial pair $(\Delta_{F}, \partial\Delta_{F})$ considered in the beginning of Appendix. 
The simplicial complex $\Delta_{F}$ was defined as the projectivization of the cone complex $\Sigma_{F}$, 
but in fact it can be realized inside $U(F)$ (though the topology at $\partial\Delta_{F}$ is different) as follows. 
Let $\sigma$ be a $k$-simplex of $\Delta_{F}$ and $\tilde{\sigma}\in \Sigma_{F}$ be the corresponding cone. 
By using the ${\Z}$-structure ${\UFZ}$ of $U(F)$, 
the rays of $\tilde{\sigma}$ can be uniquely written as 
${\R}_{\geq 0}v_{0}, \cdots, {\R}_{\geq 0}v_{k}$ 
with $v_{i}$ a primitive vector in ${\UFZ}$. 
Then we identify $\sigma$ with the convex hull of $v_{0}, \cdots, v_{k}$ inside $U(F)$. 
This embedding $\sigma\hookrightarrow U(F)$ is compatible with restriction to the faces of $\sigma$. 
Therefore we can glue them over all $\sigma\in \Delta_{F}$ to obtain an embedding 
\begin{equation*}
\Delta_{F}\hookrightarrow U(F). 
\end{equation*}
By the local finiteness outside $\partial\Delta_{F}$, 
the topology on $\Delta_{F} - \partial\Delta_{F}$ agrees with that induced from $U(F)$. 
Since ${\GF}$ preserves ${\UFZ}$, this embedding is ${\GF}$-equivariant. 
In what follows, we regard $\Delta_{F}$ as a subset of $U(F)$ in this way. 

For each simplex $\sigma$ of $\Delta_{F}$, we define its face $\sigma_{0}$ by 
$\sigma_{0}=\sigma\cap \partial\Delta_{F}$. 
(Recall that this is a face of $\sigma$ by our assumption (2) in the beginning of Appendix.) 
Since $\sigma$ is now realized as a simplex in the Euclidean space $U(F)$, 
we can take the retraction $(\sigma^{+}, \partial\sigma^{+})$ of $(\sigma, \sigma_{0})$ 
by the procedure in Definition \ref{def: retraction}. 
By Lemma \ref{lem: retract restriction}, this construction can be naturally glued with that for adjacent simplices. 
In this way we obtain the simplicial pair 
\begin{equation}\label{eqn: gluing}
(\Delta_{F}^{+}, \partial\Delta_{F}^{+}) = \bigcup_{\sigma\in \Delta_{F}}(\sigma^{+}, \partial\sigma^{+}). 
\end{equation}
We shall prove that $(\Delta_{F}^{+}, \partial\Delta_{F}^{+})$ satisfies the required properties. 
 
\begin{lemma}
$\Delta_{F}^{+}$ is locally finite and ${\GF}$-invariant. 
\end{lemma}

\begin{proof}
Since $\Delta_{F}$ is ${\GF}$-invariant, 
the canonicity of the construction $\sigma \rightsquigarrow \sigma^{+}$ inside $U(F)$ 
implies that $\Delta_{F}^{+}$ is also ${\GF}$-invariant. 
Since the simplices of $\Delta_{F}^{+}$ are now disjoint from $\partial\Delta_{F}$, 
the local finiteness of  $\Delta_{F}^{+}$ follows from that of $\Delta_{F}$ outside $\partial\Delta_{F}$. 
\end{proof}

By gluing the retraction homeomorphisms in Lemma \ref{lem: retraction map}, 
we obtain a homeomorphism 
\begin{equation*}
\Delta_{F}-\partial\Delta_{F} \to \Delta_{F}^{+}-\partial\Delta_{F}^{+}. 
\end{equation*}
It remains to compare the cochain complexes. 
We define a map 
\begin{equation*}
R : C_{c}^{\bullet}(\Delta_{F}^{+}, \partial\Delta_{F}^{+}) \to C_{c}^{\bullet}(\Delta_{F}, \partial\Delta_{F}) 
\end{equation*}
by the same procedure as \eqref{eqn: qis single}. 
Clearly this sends cochains with compact support to cochains with compact support. 
The proof of the first part of Lemma \ref{lem: cochain map single} is still valid 
and shows that $R$ is a cochain map. 

\begin{lemma}\label{lem: qis full}
$R$ is a quasi-isomorphism. 
\end{lemma}

\begin{proof}
We write $G={\GF}$. 
We proceed inductively by using Mayer-Vietoris argument for $G$-orbits of simplices. 

We choose a top-dimensional simplex $\sigma\subset \Delta_{F}$. 
Let $\Delta_{1}\subset \Delta_{F}$ be the union of simplices $G$-equivalent to $\sigma$, 
and $\Delta_{2}\subset \Delta_{F}$ be the union of the remaining top-dimensional simplices. 
Then $\Delta_{1}, \Delta_{2}$ and $\Delta_{12}=\Delta_{1} \cap \Delta_{2}$ 
are $G$-invariant subcomplexes of $\Delta_{F}$. 
We put $\partial\Delta_{\ast}=\Delta_{\ast}\cap \partial\Delta_{F}$ 
for $\ast=1, 2, 12$. 
We can construct the retraction 
$(\Delta_{\ast}^{+}, \partial\Delta_{\ast}^{+})$ of $(\Delta_{\ast}, \partial\Delta_{\ast})$ 
in the same way as \eqref{eqn: gluing}. 
By construction we have 
\begin{equation}\label{eqn: union DeltaF+}
(\Delta_{F}^{+}, \partial\Delta_{F}^{+}) = (\Delta_{1}^{+}, \partial\Delta_{1}^{+}) \cup (\Delta_{2}^{+}, \partial\Delta_{2}^{+}), 
\end{equation}
\begin{equation*}
(\Delta_{12}^{+}, \partial\Delta_{12}^{+}) = (\Delta_{1}^{+}, \partial\Delta_{1}^{+}) \cap (\Delta_{2}^{+}, \partial\Delta_{2}^{+}). 
\end{equation*}
The cochain map $R$ is defined for each pair $(\Delta_{\ast}, \partial\Delta_{\ast})$. 
Then we have the commutative diagram 
\begin{equation}\label{eqn: MV comparison}
\xymatrix@C=15pt{
0 \ar[r] & C^{\bullet}_{c}(\Delta_{F}^{+}, \partial\Delta_{F}^{+}) \ar[d]^{R} \ar[r] & 
C^{\bullet}_{c}(\Delta_{1}^{+}, \partial\Delta_{1}^{+}) \oplus C^{\bullet}_{c}(\Delta_{2}^{+}, \partial\Delta_{2}^{+}) 
\ar[d]^{R} \ar[r] & C^{\bullet}_{c}(\Delta_{12}^{+}, \partial\Delta_{12}^{+}) \ar[d]^{R} \ar[r] & 0 \\ 
0 \ar[r] & C^{\bullet}_{c}(\Delta_{F}, \partial\Delta_{F}) \ar[r] & 
C^{\bullet}_{c}(\Delta_{1}, \partial\Delta_{1}) \oplus C^{\bullet}_{c}(\Delta_{2}, \partial\Delta_{2}) 
 \ar[r] & C^{\bullet}_{c}(\Delta_{12}, \partial\Delta_{12})  \ar[r] & 0 \\ 
}
\end{equation}
where the upper exact sequence is the Mayer-Vietoris for \eqref{eqn: union DeltaF+} 
and the lower one is the Mayer-Vietoris for 
$(\Delta_{F}, \partial\Delta_{F}) = (\Delta_{1}, \partial\Delta_{1}) \cup (\Delta_{2}, \partial\Delta_{2})$. 
What is required for running induction with this diagram is the following. 

\begin{claim}\label{claim: MV start}
Let $\sigma$ be a simplex of $\Delta_{F}$ (not necessarily top-dimensional) 
and $\Delta'\subset \Delta_{F}$ be the subcomplex obtained as the union of simplices $G$-equivalent to $\sigma$. 
Let $\partial\Delta'=\Delta'\cap \partial\Delta_{F}$. 
Then the retracting cochain map  
\begin{equation}\label{eqn: R G-orbit}
R : C_{c}^{\bullet}((\Delta')^{+}, \partial(\Delta')^{+}) \to C_{c}^{\bullet}(\Delta', \partial\Delta') 
\end{equation}
is a quasi-isomorphism. 
\end{claim}

\begin{proof}
When $\sigma\subset \partial\Delta_{F}$, 
we have $\Delta'=\partial\Delta'$ and $(\Delta')^{+}=\emptyset$. 
Hence both 
$C_{c}^{\bullet}(\Delta', \partial\Delta')$ and  
$C_{c}^{\bullet}((\Delta')^{+}, \partial(\Delta')^{+})$ are trivial. 
In what follows, we assume $\sigma \not\subset \partial\Delta_{F}$. 

If $\sigma \cap \gamma\sigma$ is non-empty for $\gamma \ne {\rm id}\in G$, 
it is contained in $\partial\Delta_{F}$ because the vertices of $\sigma \cap \gamma\sigma$ 
are fixed by $\gamma$ by our assumption (1) in the beginning of Appendix 
and $G$ acts on $\Delta_{F}$ freely outside $\partial\Delta_{F}$. 
Therefore, if $\tau$ is a simplex of $\Delta'$ not contained in $\partial\Delta'$, 
it is a face of a unique translate $\gamma\sigma$ of $\sigma$. 
This shows that we have the decomposition 
\begin{equation*}
C_{c}^{\bullet}(\Delta', \partial\Delta') = 
\bigoplus_{\gamma\in G} C_{c}^{\bullet}(\gamma\sigma, \partial(\gamma\sigma)) 
\end{equation*}
where 
$\partial(\gamma\sigma) = \gamma\sigma \cap \partial\Delta_{F}$. 
On the other hand, since $\sigma^{+}$ is now disjoint from $\partial \Delta_{F}$, 
we have $\sigma^{+} \cap \gamma\sigma^{+} = \emptyset$ if $\gamma\ne {\rm id}$. 
Therefore 
$(\Delta')^{+}=\sqcup_{\gamma}\gamma\sigma^{+}$ 
and 
$\partial(\Delta')^{+}=\sqcup_{\gamma} \partial(\gamma\sigma^{+})$. 
This implies     
\begin{equation*}
C_{c}^{\bullet}((\Delta')^{+}, \partial(\Delta')^{+}) = 
\bigoplus_{\gamma\in G} C_{c}^{\bullet}(\gamma\sigma^{+}, \partial(\gamma\sigma^{+})).  
\end{equation*}
It follows that \eqref{eqn: R G-orbit} is the direct sum of the retraction maps 
for each $(\gamma\sigma, \partial(\gamma\sigma))$, 
which is a quasi-isomorphism by Lemma \ref{lem: cochain map single}. 
\end{proof}

Now, by Claim \ref{claim: MV start} and induction on the number of $G$-orbits of top-dimensional simplices, 
the middle $R$ in \eqref{eqn: MV comparison} is a quasi-isomorphism. 
By induction on the dimension (using Claim \ref{claim: MV start} similarly), 
the right $R$ is also a quasi-isomorphism. 
By the five lemma, we conclude that the left $R$ in \eqref{eqn: MV comparison} is a quasi-isomorphism. 
\end{proof}

The proof of Proposition \ref{prop: retraction} is now completed.

\begin{remark}\label{remark: group action}
Proposition \ref{prop: retraction} holds more generally for a simplicial pair 
$(\Delta, \partial\Delta)$ acted on by a discrete group $G$ satisfying the following conditions: 
\begin{enumerate}
\item $\Delta$ is locally finite outside $\partial\Delta$. 
\item For any simplex $\sigma$ of $\Delta$, $\sigma\cap \partial\Delta$ is a face of $\sigma$. 
\item Two vertices of a simplex of $\Delta$ are not $G$-equivalent. 
\item The stabilizer of a vertex not contained in $\partial\Delta$ is trivial. 
\item There are only finitely many simplices up to $G$-equivalence. 
\end{enumerate}
Indeed, many arguments in \S \ref{ssec: glue} are valid in this generality 
except the realization inside $U(F)$. 
To avoid this short-cut, we take the quotient $\Delta'=\Delta/G$, which is a finite $\Delta$-complex, 
then take the simplicial complex $\Delta''$ which have the same vertices as $\Delta'$, 
and finally realize $\Delta''$ in an Euclidean space ${\R}^N$. 
This embeds each simplex of $\Delta$ in ${\R}^{N}$ in a $G$-invariant way. 
This enables us to take a $G$-invariant retraction. 
\end{remark}


\end{document}